\newcommand\ds\displaystyle
\newtheorem{theorem}{Theorem}
\newtheorem{proposition}[theorem]{Proposition}
\newtheorem{lemma}[theorem]{Lemma}
\newtheorem{corollary}[theorem]{Corollary}
\theoremstyle{definition}
\theoremstyle{remark}
\newtheorem{remark}[theorem]{Remark}
\def\R{\mathbb{R}}
\def\N{\mathbb{N}}
\def\Z{\mathbb{Z}}
\def\Lra{\Longrightarrow}
\definecolor{verde}{RGB}{20,150,100}
\newcommand{\Om}{\Omega}
\newcommand{\om}{\omega}
\newcommand{\vphi}{\varphi}
\newcommand{\ra}{\rightarrow}
\def \d{\delta}
\newcommand{\vps}{\varepsilon}
\newcommand{\sm}{\setminus}
\newcommand{\rau}{\rightharpoonup}
\newcommand{\sq}{\subseteq}
\newcommand{\ov}{\overline}
\newcommand{\nif}{{n \rightarrow +\infty}}
\newcommand{\B}{\mathbf{B}}
\renewcommand{\L}{\mathbf{L}}
\newcommand{\1}{\mathbf{1}}
\renewcommand{\div}{\text{div}\,}
\renewcommand{\vec}[1]{\mathbf{#1}}
\renewcommand{\d}{\:\mathrm{d}}
\newcommand{\wlim}[1]{\xrightharpoonup{#1}}
\begin{document}

\title[]{Maximization of Neumann eigenvalues}

\author[D. Bucur]
{Dorin Bucur}
\address[Dorin Bucur]{Univ. Savoie Mont Blanc, CNRS, LAMA \\
73000 Chamb\'ery, France
}
\email[D. Bucur]{dorin.bucur@univ-savoie.fr}
\author[E. Martinet]
{Eloi Martinet}
\address[Eloi Martinet]{Univ. Savoie Mont Blanc, CNRS, LAMA \\
73000 Chamb\'ery, France}
\email[E. Martinet]{eloi.martinet@univ-smb.fr}
\author[E. Oudet]
{Edouard Oudet}
\address[Edouard Oudet]{Univ. Grenoble Alpes, CNRS, LJK, 38041 Grenoble, France}
\email[E. Oudet]{edouard.oudet@univ-grenoble-alpes.fr}
\thanks{The authors were supported by the LabEx PER\-SYVAL-Lab GeoSpec (ANR-11-LABX-0025-01) and ANR SHAPO (ANR-18-CE40-0013).
}

\subjclass[2010]{}
\keywords{Neumann  eigenvalues, shape optimization, P\'olya conjecture, Kr\"oger inequalities, Sturm-Liouville eigenvalues}

\date{\today}
\maketitle

\begin{abstract}

This paper is motivated by the maximization of  the $k$-th eigenvalue of the Laplace operator with Neumann boundary conditions among domains of ${\mathbb R}^N$ with prescribed measure.
We relax the problem to the class of (possibly degenerate) densities in $\R^N$ with prescribed mass and prove the existence of an optimal density.
For $k=1,2$ the two problems are equivalent and the maximizers are known to be one and two equal balls, respectively. For $k \ge 3$ this question remains open, except in one dimension of the space where we prove that the maximal densities correspond to a union of $k$ equal segments.   This result provides sharp upper bounds for Sturm-Liouville eigenvalues and proves the validity of the P\'olya conjecture in the class of  densities in $\R$.
Based on the relaxed formulation, we provide numerical approximations of optimal densities for $k=1, \dots, 8$ in $\R^2$.

\end{abstract}
\tableofcontents

\section{Introduction}
Shape optimization problems involving   partial differential equations with Neumann  conditions on the free boundary  appear naturally in different mathematical models coming from structural mechanics, image analysis, biology, etc. The mathematical analysis of such problems is in general complicated, mainly because of the absence of any interface energy which controls the interplay between the PDE and the geometry.   Indeed, contrary to Dirichlet or Robin boundary conditions which implicitly involve a boundary energy, the freedom of a Neumann interface raises crucial difficulties. In some situations, for instance crack propagation models or Mumford-Shah functional, a boundary energy, typically the Hausdorff measure (which is not related to the PDE itself) is naturally present. However, in some other situations, like those involving the  vibration of free structures, such extra energy it is not natural on the free parts.

We focus in this paper on a class of problems which involve the spectrum of the Laplace operator with Neumann boundary conditions.
Let $N \ge 1$ and $\Om \sq \R^N$ be an open, bounded, Lipschitz set. The Laplace operator with Neumann boundary conditions has a spectrum consisting on  eigenvalues,  denoted (counting their multiplicities)
$$0 = \mu_0(\Om)< \mu _1(\Om) \le \mu_2(\Om) \le \dots \ra +\infty.$$
For every $k \ge 1$, we have
$$\mu_k(\Om) = \min_{S\in{\mathcal S}_{k+1}} \max_{u \in S\sm \{0\}} \frac{\int_\Om |\nabla u|^2 dx}{\int_\Om u^2 dx},$$
where ${\mathcal S}_k$ is the family of all subspaces of dimension $k$ in
$H^1(\Om)$. Then, for some $u \in H^1(\Om)\sm \{0\}$
$$
\begin{cases}
-\Delta u = \mu_k(\Om) u \mbox { in } \Om,\\
\frac{\partial u}{\partial n} = 0  \mbox { on } \partial \Om,
\end{cases}
$$
in a weak sense (which is also strong as soon as $\Om$ is of class $C^2$).

We are concerned with the  following shape optimization problem: given $m>0$, solve
\begin{equation} \label{bmo02}
\max \{ \mu_k(\Om) :  \Om \sq \R^N, \Om \mbox{ bounded, open and Lipschitz }, |\Om| =m\}.
\end{equation}
This question is, in general, open. For the Laplace operator with Dirichlet boundary conditions, the similar (minimizing) question has been intensively investigated in last 30 years. Since the seminal existence result by Buttazzo and Dal Maso \cite{BDM93}, a series of results,  both of analytical and numerical type, have been obtained  (see the recent survey  \cite{He17}). In the Dirichlet case, the  description of relaxed problems is known. This is due to full understanding of  the Gamma convergence limits for the energy functionals (and hence of the spectrum), while local analysis by free boundary techniques leads to qualitative information on the optimal shapes. If Dirichlet boundary conditions are replaced by Neumann conditions, as in problem \eqref{bmo02}, several deep, new difficulties appear, changing completely the nature of the problem.

First, on nonsmooth domains the resolvent of the Neumann Laplacian is not necessarily compact, so that the spectrum may not consist on eigenvalues. Keeping track of the Lipschitz character of a maximizing sequence is an impassable challenge.
Second, there is no Gamma convergence description of limits of the energy functionals, and even if such a result was available, it would not be enough for the control of the spectrum, because of the absence of collective compactness of Sobolev spaces $H^1(\Om)$ in $L^2(\R^N)$. Third, local analysis to search some regularity of the free boundary seems out of reach, since problem \eqref{bmo02} is  of $\max$-$\min$-$\max$  type; consequently, one can not test maximality by local perturbations of the eigenfunctions. Fourth, the absence of any interface energy makes the final answer different from the case of Dirichlet (or Robin) boundary conditions, being unclear: for some values of $k$ optimal shapes do exist, while for others, this may not be the case.

Another point of interest in problem \eqref{bmo02} is related to the long standing P\'olya conjecture, which states that
$$\forall k \in \N,\;\;\;\; \mu_k(\Om) \le \frac{4\pi^2 k^\frac2N}{(\om_N|\Om|)^\frac2N}.$$
This inequality is known to hold  only  for some particular domains in $\R^N$, like those tiling the space  \cite{La97} or with a particular geometric structure \cite{FrSa22}. In general, for arbitrary domains, it has only been proved for  $k=1,2$. Any qualitative information on the solution to problem \eqref{bmo02} may provide useful information on the  P\'olya conjecture.

Coming back to Problem \eqref{bmo02}, let us briefly recall the known results. For $k=1$ the solution of problem \eqref{bmo02}   corresponds to a ball of volume $m$. This was proved by Szeg\"{o} \cite{Sz54} (for smooth simply connected domains in $\R^2$) and by  Weinberger \cite{We56} (for Lipschitz domains in $\R^N$). For $k=2$ the solution is the union of two disjoint equal balls. This result was proved in $\R^2$ for smooth simply connected sets by Girouard, Nadirashvili and Polterovich in \cite{GNP09} and for general domains in $\R^N$ by Bucur and Henrot in \cite{BuHe19}. As a consequence,  the P\'olya conjecture holds for $k=1,2$. In this direction, for $k\ge 3$  Kr\" oger found in  \cite{Kr92} a series of  bounds  larger than the conjectured ones, however, respecting the growth of the Weyl law.

 Although it puzzled the spectral geometry community in the last years, problem \eqref{bmo02} remains largely open for $k\ge 3$ (and $N\ge 2$). We can only refer to several numerical approximations of suspected geometries which maximize $\mu_k$, see for instance \cite{AO17,AF12, Be15}, but the optimal geometries are not even proved to exist! The case $N=1$ is trivial and the answer is the union of $k$ segments of length $\frac mk$ (possibly joining at their extremities).

\medskip
The main observation motivating this work comes from \cite{BuHe19}. Precisely, there it is proved for $k=1,2$ that the optimal geometries (a ball, two equal balls, respectively) are in fact optimal in the larger class of (possibly degenerate) densities satisfying a mass contraint. In this class, a Lipschitz set is identified with the density equal to its characteristic function. Out of this observation, a natural question emerges: is this phenomenon true for every $k$? A positive answer would open the way to the proof of existence of optimal sets in \eqref{bmo02} for every $k$, while a negative answer would even raise more questions.

\medskip
Let $\rho : \R^N \ra [0,1]$ be a measurable function such that $0<\int_{\R^N} \rho dx <+\infty$. We consider  the {\it possibly degenerate} eigenvalue problem defined via the relaxation of the Rayleigh quotient: for every integer $k\ge 0$, we set
\begin{equation}\label{bmo03.0}
 \mu _k(\rho) := \inf_{S\in{\mathcal S}_{k+1}} \max_{u \in S} \frac{\int_{\R^N} \rho|\nabla u|^2 dx}{\int_{\R^N} \rho u^2 dx},\end{equation}
where ${\mathcal S}_{k+1}$ is the family of all subspaces of dimension $k+1$ in
\begin{equation}\label{bmo03}
\{u\cdot 1_{\{\rho (x)>0\}}: u \in C^\infty_c (\R^N)\}.
\end{equation}
Clearly, if $\rho$ satisfies some suitable assumptions (for instance if it equals the characteristic function of a bounded, open, Lipschitz set or if it is a Gaussian measure in $\R^N$, etc.), the Rayleigh quotient  above leads to a classical spectrum associated to a positive, self-adjoint, compact operator. If $\rho$ is just arbitrary, the definition \eqref{bmo03.0} itself is correct in the sense that the numbers $\mu_k(\rho)$ are well defined, but there is no an interpretation in terms of the spectrum of a well defined operator. By abuse of language, we still call them {\it eigenvalues of the density} $\rho$.

It is natural to consider the (relaxed) problem
\begin{equation} \label{bmo01}
\sup \{ \mu_k(\rho) :  \rho : \R^N \ra [0,1], \int_{\R^N}\rho dx =m\},
\end{equation}
or its scale invariant version
\begin{equation} \label{bmo01.1}
\mu_k^*:= \sup \{ \Big (\int _{\R^N} \rho dx \Big)^\frac 2N\mu_k(\rho) :  \rho : \R^N \ra [0,1]\},
\end{equation}
and to observe that, in general, this value is not smaller than the one given by problem \eqref{bmo02}, from the simple fact that the class of Lipschitz sets is implicitly contained in the class of  densities.
In \cite{BuHe19} it was proved that for $k=1,2$ problems \eqref{bmo02} and \eqref{bmo01} are indeed equivalent, the maximizers being the same, corresponding to one and two equal balls, respectively.

\medskip
This observation raises several questions.

 \medskip
\noindent{\it
Does problem \eqref{bmo01.1}  have a solution?}
 Our first result is that
 problem  \eqref{bmo01.1}  has a solution in $\R^N$ possibly consisting in a {\it collection} of  at most $k$ densities.
  \begin{theorem}\label{bmo06.1}
The maximal value $\mu_k^*$  in \eqref{bmo01.1} is attained. Precisely,   there exist $j \in \N$, $j \le k$, $\rho_1, \dots, \rho_j : \R^N \ra [0,1]$ and $n_1, \dots, n_j \in \N$ with $n_1+\dots + n_j= k+1-j$ such that
$$\sum_{i=1}^j \int_{\R^N} \rho_i dx = 1 \quad \mbox {and} \quad \mu_k^*= \mu_{n_1}(\rho_1) = \dots = \mu_{n_j}(\rho_j).$$
\end{theorem}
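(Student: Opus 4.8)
The plan is to run the direct method in the calculus of variations on the scale-invariant problem \eqref{bmo01.1}, but with a crucial twist: rather than trying to keep a maximizing sequence from spreading out to infinity, I will \emph{allow} it to split into separating pieces and track each piece independently. Concretely, take a maximizing sequence $\rho_n : \R^N \to [0,1]$ with $\int_{\R^N} \rho_n\, dx = 1$ (normalizing the mass by scale invariance) and $(\int \rho_n)^{2/N}\mu_k(\rho_n) = \mu_k(\rho_n) \to \mu_k^*$. The first step is to establish the basic structural facts about $\mu_k^*$: it is finite (an upper bound comes from a test-function argument, or from Kr\"oger-type inequalities cited in the introduction, giving $\mu_k^* < +\infty$), and it is strictly increasing in $k$, since $\mu_{k-1}(\rho) \le \mu_k(\rho)$ and one can always strictly improve by adding a disjoint small ball carrying a tiny fraction of the mass placed far away — this monotonicity is what will force the induction on $k$ to terminate and will be used repeatedly to compare a ``split'' configuration with $\mu_{k-1}^*$, $\mu_{k-2}^*$, etc.

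The heart of the argument is a concentration-compactness alternative applied to the densities $\rho_n$ (equivalently, to the measures $\rho_n\,dx$). After passing to a subsequence, one of three things happens. In the \emph{compactness} case, after translation $\rho_n \rightharpoonup \rho$ weakly-$*$ in $L^\infty$ with no loss of mass, $\int \rho = 1$, and one shows lower semicontinuity $\mu_k(\rho) \ge \limsup_n \mu_k(\rho_n) = \mu_k^*$ — here the inequality goes the favorable way because the Rayleigh quotient defining $\mu_k$ is, roughly, upper semicontinuous under this convergence when one uses fixed smooth test subspaces $S$ in \eqref{bmo03.0}; this yields a single density ($j=1$, $n_1 = k$). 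The \emph{vanishing} case is excluded because it would force $\mu_k(\rho_n) \to 0$ (or at least fail to reach the finite positive value $\mu_k^*$), contradicting maximality once we know $\mu_k^* > 0$. In the \emph{dichotomy} case, the mass splits: one finds $\lambda \in (0,1)$ and decompositions with $\rho_n \approx \rho_n^{(1)} + \rho_n^{(2)}$, the supports of the two parts drifting infinitely far apart, $\int \rho_n^{(1)} \to \lambda$, $\int \rho_n^{(2)} \to 1-\lambda$. The key spectral fact to prove here is an exact combinatorial identity for the eigenvalues of a disjoint union of densities whose components are infinitely separated: $\mu_k$ of the union equals the $(k{+}1)$-th smallest number in the multiset obtained by merging the spectra (with multiplicities, counting the zero eigenvalue once per component) of the pieces. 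This is the density analogue of the standard fact that the Neumann spectrum of a disjoint union is the sorted union of the spectra; proving it from \eqref{bmo03.0}--\eqref{bmo03} requires showing that optimal test subspaces asymptotically decouple across the two far-apart pieces, which is where the bulk of the technical work lies.

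Granting that identity, the dichotomy case is closed by an extremality/recursion argument: writing $\mu_k^* = \lim_n \mu_k(\rho_n^{(1)} + \rho_n^{(2)})$ and using the combinatorial identity, the value $\mu_k^*$ is realized as some $\mu_{n_1}$ of the first piece which must equal some $\mu_{n_2}$ of the second piece (otherwise the smaller one could be enlarged by rescaling to strictly increase the overall $\mu_k$, contradicting $\mu_k^* = \sup$), with $n_1 + n_2 = k + 1 - 2$; moreover each piece must itself be extremal for its own index, i.e. $\mu_{n_i}(\rho_n^{(i)}) \to \mu_{n_i}^*$. Applying the induction hypothesis (the theorem for indices $< k$) to each piece, or iterating the concentration-compactness dichotomy finitely many times (it must stop after at most $k$ splittings, since each split strictly decreases the available eigenvalue budget $k+1-j$ and $\mu_{n_i}^* < \mu_k^*$ keeps the indices bounded), produces the final collection $\rho_1, \dots, \rho_j$ with $j \le k$, integers $n_1, \dots, n_j$ summing to $k+1-j$, total mass $1$, and $\mu_k^* = \mu_{n_1}(\rho_1) = \dots = \mu_{n_j}(\rho_j)$.

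The main obstacle I anticipate is the dichotomy bookkeeping — specifically, rigorously proving the combinatorial spectral identity for infinitely-separated densities at the level of the relaxed definition \eqref{bmo03.0}, where there is no underlying compact self-adjoint operator to invoke, so the usual direct-sum decomposition of Hilbert spaces is unavailable. One must instead argue directly with the min-max over subspaces of the form \eqref{bmo03}: the upper bound on $\mu_k$ of the union is easy (build a test subspace by juxtaposing test functions supported near each piece), but the matching lower bound requires a careful cutoff argument showing that any near-optimal $(k{+}1)$-dimensional subspace can be perturbed, with negligible cost to the Rayleigh quotient, into one whose elements are sums of functions each supported near a single piece — exploiting that the two pieces are separated by a region where $\rho_n$ is essentially zero so that gradient and mass integrals there are negligible. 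A secondary subtlety is ensuring the induction is well-founded: one needs the a priori bound $\mu_{k-1}^* < \mu_k^*$ and finiteness of $\mu_k^*$ before the recursion can be closed, so those should be proved first as standalone lemmas.
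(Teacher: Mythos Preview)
Your overall architecture---concentration-compactness on a maximizing sequence, then compactness/vanishing/dichotomy---is exactly the paper's. But there is a genuine gap, and it is precisely where you are most casual: the vanishing case. You assert that vanishing ``would force $\mu_k(\rho_n)\to 0$'' without argument, yet this is the technical heart of the proof. The paper obtains it through a quantitative \emph{geometric control of the spectrum} (Lemma~\ref{bmo08}): invoking the Grigor'yan--Netrusov--Yau capacitor decomposition (Lemma~\ref{bmo07}), one finds $k{+}1$ annuli each carrying mass $\ge c_N m/(k{+}1)$ whose doublings are pairwise disjoint, builds cutoff test functions on them, and reads off $\mu_k(\rho)\le 4(k{+}1)/(c_N (R^*)^2)$ where $R^*$ is the smallest outer radius; contrapositively, $\mu_k(\rho_n)\ge M>0$ forces a ball of radius $\sqrt{4(k{+}1)/(c_N M)}$ to contain a fixed fraction of the mass, contradicting vanishing. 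An \emph{enhanced} version (Lemma~\ref{bmo12}) gives the same concentration for each far-separated piece in a dichotomy decomposition, and this is what makes the iteration terminate (more than $k$ concentration spots force $\mu_k=0$). None of this appears in your plan, and without it neither your vanishing step nor your dichotomy recursion can be closed.

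Conversely, what you flag as the ``main obstacle''---a two-sided spectral identity for infinitely-separated pieces, with a delicate cutoff argument for the lower bound---is unnecessary. The paper uses only the upper-semicontinuity direction (Lemma~\ref{bmo17}): if $\rho_n$ splits into pieces with weak-$*$ limits $\rho^1,\dots,\rho^j$, then $\mu_k(\rho^1\sqcup\cdots\sqcup\rho^j)\ge\limsup_n\mu_k(\rho_n)$. That direction is the easy one (juxtapose near-optimal test subspaces supported near each piece), and once the limit collection has full mass it is automatically a maximizer. Your alternative route via induction on $k$, strict monotonicity $\mu_{k-1}^*<\mu_k^*$, and an ``optimality forces equality of the pieces'' argument is plausible, but it still requires ruling out vanishing of each piece at every stage of the recursion---so you are led back to the missing geometric-control lemma.
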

Related to the distribution of eigenvalues, the notion  of collection of densities in Theorem \ref{bmo06.1} plays a similar role as the connected components of an open set.
 We also point out that the existence result above extends to general functionals of eigenvalues
 $$\rho \to F(\mu_1(\rho), \dots, \mu_k(\rho)),$$
 where $F$ is nondecreasing in each variable and upper semicontinuous.
The key technical point in the proof of Theorem \ref{bmo06.1} is to get some control of the concentration of mass for a maximizing sequence of densities; this is a difficult task because of the absence of any interface energy. In general, we are not able to prove or disprove the nondegeneracy of optimal densities.

 Following \cite{BuHe19},  for $k=1,2$ problems \eqref{bmo02} and \eqref{bmo01} are equivalent. For $k \ge 3$, not only this is a highly challenging question which requires regularity analysis of optimal densities, but the answer might be negative, as our numerical results suggest. To this question, we give a complete answer  in  dimension one of the space, i.e. for $N=1$, where we prove that an optimal density corresponds to a union of equal segments of lengths $\frac mk$, possibly touching at their extremities.
 \begin{theorem}[One dimension of the space]\label{bmo20.1}
In $\R$, $\forall k \in \N$
$$\mu_k^*= \pi^2k^2.$$
Equality is attained for $\rho$ being the characteristic function associated to the union of at most $k$ open, pairwise disjoint segments of total length equal to $m:= \int_\R\rho dx$, each one with length an entire multiple of  $\frac mk$.

\end{theorem}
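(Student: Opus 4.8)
The plan is to reduce the one-dimensional density problem to a Sturm--Liouville eigenvalue problem on an interval and then to an optimal mass-distribution problem that one can solve explicitly. First I would fix the scaling: by the scale-invariance built into \eqref{bmo01.1}, it suffices to work on a fixed interval, say $(0,1)$, and to show that for any measurable $\rho:(0,1)\to[0,1]$ the weighted Neumann eigenvalue satisfies $\big(\int_0^1\rho\big)^2\mu_k(\rho)\le \pi^2 k^2$, with equality in the limit when $\rho$ is (close to) the characteristic function of $k$ equal subintervals. A first reduction: by Theorem~\ref{bmo06.1}, an optimizer exists as a collection of $j\le k$ densities $\rho_1,\dots,\rho_j$ with $\mu_k^*=\mu_{n_i}(\rho_i)$ and $\sum n_i=k+1-j$, so it is enough to understand a single density $\rho$ on $(0,1)$ and bound $\big(\int\rho\big)^2\mu_n(\rho)$ by $\pi^2(n+1)^2$ — then summing the mass constraints and using $\sum(n_i+1)=k+1$ together with a superadditivity/concavity argument recovers $\pi^2 k^2$ for the collection, with the extremal case forcing all pieces to be equal segments.

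Second, and this is the heart of the matter, I would analyze $\mu_n(\rho)$ for a single density on $(0,1)$. On the (open) set $\{\rho>0\}$ the Rayleigh quotient $\int \rho (u')^2/\int\rho u^2$ is a genuine weighted Sturm--Liouville quotient with Neumann conditions; its eigenfunctions satisfy $-(\rho u')'=\mu\,\rho u$ weakly. The classical way to bound such eigenvalues from above is a change of independent variable: let $x\mapsto t(x)$ be a reparametrization that flattens the weight. Concretely, where $\rho>0$ one can try $t$ defined by $dt = \rho\,dx$ on the support (a "mass coordinate"), under which $\int\rho(u')^2\,dx$ and $\int\rho u^2\,dx$ transform so that the problem becomes comparable to an \emph{unweighted} Neumann problem on an interval of length $\int_0^1\rho\,dx=m$, whose $n$-th eigenvalue is $\pi^2 n^2/m^2$; the degenerate regions ($\rho=0$) and the fact that $\rho\le 1$ are exactly what make the comparison go in the right direction (shrinking the effective length). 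Care is needed because $\{\rho>0\}$ may have many components and $\rho$ may vanish on a fat set; the clean statement to aim for is a monotonicity: replacing $\rho$ by $\min(\rho,1)=\rho$ and then by the characteristic function of a shortest interval of the same mass only increases the quotient. I would phrase this via the test-space definition \eqref{bmo03.0}: given an optimal $(n+1)$-dimensional subspace for a reference configuration, transport it to $\rho$ and estimate each Rayleigh quotient, using Cauchy--Schwarz / Jensen to absorb the weight.

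Third, once $\big(\int\rho\big)^2\mu_n(\rho)\le\pi^2(n+1)^2$ is established for a single density (with equality iff $\rho=\mathbf 1_{(0,m)}$ up to translation, i.e.\ a single segment and $n+1$ determined), I would assemble the collection: $\mu_k^*=\min_i \mu_{n_i}(\rho_i)$ in the sense of the theorem forces, for the sup, all the products $\big(\int\rho_i\big)^2\mu_{n_i}(\rho_i)$ to be balanced; writing $m_i=\int\rho_i$ with $\sum m_i=1$, optimality gives $\mu_k^*\le \min_i \pi^2(n_i+1)^2/m_i^2$, and maximizing $\min_i (n_i+1)/m_i$ over $\sum m_i=1$ yields $m_i\propto (n_i+1)$, hence $\min_i(n_i+1)/m_i=\sum_i(n_i+1)=k+1-j+j=k+1$?? — here one must instead use that the correct combinatorial identity is $\sum_i (n_i+1)=k+1$ only when $j$ pieces are split off, and the true optimum of $\min_i (n_i+1)/m_i$ subject to $\sum m_i\le 1$ and $n_i\in\N$ is achieved by taking each $n_i=0$ and $j=k$, giving value $k$, i.e.\ $\mu_k^*=\pi^2k^2$ with $k$ equal segments; the possibility of several $n_i\ge 1$ (few long segments carrying high eigenvalues) must be checked to never beat this, which follows from $(n+1)^2$ growing slower than what $k$ unit cells give after the mass split. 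The extremal analysis then pins down the segments to have length a multiple of $\tfrac mk$, since merging two adjacent equal segments corresponds to $n_i=1$ on a piece of mass $\tfrac{2m}{k}$.

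The main obstacle I anticipate is the rigorous treatment of the change of variables / comparison when $\rho$ is a completely arbitrary measurable function: making sense of "eigenfunctions" via \eqref{bmo03} when $\{\rho>0\}$ is a bad open set, justifying the mass-coordinate substitution on each component, and controlling the gluing across the null set $\{\rho=0\}$ (where test functions are unconstrained, which a priori could only help the $\max$ and thus hurt the upper bound — this needs a careful argument, likely by approximating $\rho$ from above by characteristic functions of finite unions of intervals and passing to the limit using upper semicontinuity of $\mu_k$ along such monotone approximations). The combinatorial optimization over $(j;n_1,\dots,n_j;m_1,\dots,m_j)$ is elementary once the single-density bound is in hand.
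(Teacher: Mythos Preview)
Your central idea---the mass coordinate $t(x)=\int_{-\infty}^x\rho$---is correct and is \emph{not} what the paper does. In fact it yields the bound more cheaply than the paper's argument, but your write-up buries it under scaffolding that is both unnecessary and, in places, wrong.

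\textbf{What actually works.} For any $\rho\in L^1(\R,[0,1])$ with $m=\int\rho$, set $u_j(x)=\cos\bigl(j\pi t(x)/m\bigr)$, $j=0,\dots,k$. These are globally Lipschitz (since $|t'|=\rho\le 1$) and, after a routine mollification/cutoff, admissible in the test space \eqref{bmo03}. A direct computation using $dt=\rho\,dx$ gives, for $u=\sum a_j u_j$,
\[
\int_\R \rho\,u^2\,dx=\int_0^m\Bigl(\sum a_j\cos\tfrac{j\pi t}{m}\Bigr)^2 dt,\qquad
\int_\R \rho\,(u')^2\,dx=\int_0^m\tilde\rho(t)^2\Bigl(\sum a_j\tfrac{j\pi}{m}\sin\tfrac{j\pi t}{m}\Bigr)^2 dt,
\]
where $\tilde\rho(t)=\rho(x(t))\le 1$. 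Dropping $\tilde\rho^2$ and using the $L^2(0,m)$-orthogonality of the trigonometric system gives $R_\rho(u)\le \pi^2 k^2/m^2$ for every $u$ in this span, hence $m^2\mu_k(\rho)\le\pi^2 k^2$. Equality at $k$ equal segments (or one segment) is immediate. That is the whole proof of the upper bound.

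\textbf{What to discard.} The reduction via Theorem~\ref{bmo06.1} to a collection of pieces, followed by a combinatorial optimisation over $(j;n_i;m_i)$, is not needed: the bound above already applies to any single density, including characteristic functions of disconnected sets. Your third paragraph is tangled precisely because you are optimising a redundant decomposition, and the bound you write there, $\bigl(\int\rho\bigr)^2\mu_n(\rho)\le\pi^2(n+1)^2$, is false---a single interval of length $m$ has $m^2\mu_n=\pi^2 n^2$, not $\pi^2(n+1)^2$. With the correct bound $\pi^2 n^2$ in hand, your combinatorial step (``take $n_i=0$ and $j=k$'') would give $\mu_0=0$, which is nonsense; the issue is that you are mixing up the index in the collection spectrum with the index on each piece. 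Drop this entire layer. Likewise, the obstacles you anticipate (degeneracy of $\rho$, gluing across $\{\rho=0\}$, approximation from above) all evaporate: the test functions $u_j$ are defined globally on $\R$ regardless of how bad $\{\rho>0\}$ is, and the gaps of $\{\rho=0\}$ simply become measure-zero sets in the $t$ variable.

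\textbf{Comparison with the paper.} The paper takes a completely different route. It first treats nondegenerate $\rho:[0,a]\to[\delta,1]$ and constructs a piecewise-sinusoidal test function $g_P$ depending on $k$ free nodes $P=(a_1,\dots,a_k)$; the nodes are chosen so that $g_P$ is $\rho$-orthogonal to the first $k$ eigenfunctions $u_0,\dots,u_{k-1}$. Existence (and uniqueness) of such $P$ is established by a topological degree argument coupled with Sturm's extended nodal theorem (Lemma~\ref{bmo22}). A ``mass transplantation'' estimate on each bump then gives the bound. Finally, general $\rho$ is handled by a delicate two-scale regularisation ($\rho+\varepsilon e^{-x^2}$ in the numerator, $\rho 1_{[-1/\varepsilon,1/\varepsilon]}+\varepsilon^2 e^{-x^2}$ in the denominator) designed to avoid spurious modes, together with the two-weight estimate of Lemma~\ref{bmo15.2}. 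Your mass-coordinate argument bypasses all of this---no degree theory, no Sturm, no approximation---at the cost of not directly yielding the sharper two-weight inequality of Lemma~\ref{bmo15.2}; but for Theorem~\ref{bmo20.1} itself that inequality is not needed.
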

The argument  requires fine arguments from topological degree theory and, in fact,  leads to a slightly stronger inequality (see Lemma \ref{bmo15.2}). In particular,  Theorem \ref{bmo20.1} provides  sharp upper bounds for Sturm-Liouville eigenvalues (see  \cite{Kr99} for a related problem involving concave density functions).

\medskip
\noindent{\it Does the P\'olya conjecture hold for densities?} A second consequence of   Theorem \ref{bmo20.1}  is the validity of the the P\`olya conjecture for densities. Precisely, for every  $\rho \in \L^1(\R, [0,1])$ we have
$$\forall k \in \N, \; \mu_k(\rho) \le \frac{\pi^2k^2}{\Big (\int_\R \rho dx \Big)^2 }.$$

 A natural question is whether  for every  $N\ge 2$ the P\'olya conjecture holds for densities in $\R^N$
$$\forall k \in \N, \mu_k(\rho) \le \frac{4\pi^2 k^\frac2N}{(\om_N\int_{R^N}\rho dx )^\frac2N} ?$$

While for $k=1,2$ the inequality is true, we prove that the estimates obtained by Kr\"oger \cite{Kr92} in the classical setting,  continue to hold for densities.
This result brings support to the thesis that the
P\'olya conjecture might hold for densities. With respect to the classical setting, the main improvement is that an optimal density does exist for each $k$!
\begin{theorem}[Kr\" oger estimates for densities] \label{bmo33.1}
Let $N \ge 2$, $\rho \in \L^1(\R^N) \cap \L^\infty(\R^N, \R^+)$, $\rho \not \equiv 0$. Then
\[\forall k \in \N, \;\;\; \mu_k(\rho) \leq 4\pi^2\Bigg( \frac{(N+2)k}{2\omega_{N}}\frac{||\rho||_\infty}{||\rho||_1} \Bigg)^{2/N},\]
where $\omega_{N}$ is the volume  of the unit ball of $\R^N$.
\end{theorem}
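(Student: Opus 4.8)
\emph{Strategy.} The plan is to transpose Kr\"oger's plane-wave argument \cite{Kr92} to the quantities $\mu_k(\rho)$; the only substantial modification is that, since for a general density the associated operator need not have compact resolvent (as observed in the introduction), one cannot project onto ``the first $k$ eigenfunctions'' and must instead use a spectral projection. Work in the Hilbert space $\mathcal H:=\L^2(\R^N,\rho\,dx)$ and let $a(u,u):=\int_{\R^N}\rho|\nabla u|^2dx$, a closed nonnegative symmetric form with form domain $V$, the completion of $\{u\cdot\1_{\{\rho>0\}}:u\in C^\infty_c(\R^N)\}$ for the norm $\big(\int_{\R^N}\rho(|\nabla u|^2+u^2)dx\big)^{1/2}$; since this set is dense in $\mathcal H$, the form is represented by a nonnegative self-adjoint operator $A$ with $D(A^{1/2})=V$, and by density $\mu_k(\rho)$ defined in \eqref{bmo03.0} equals the $(k+1)$-st min-max value of $A$. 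In particular, for every $\lambda'$ with $0<\lambda'<\mu_k(\rho)$ the spectral projection $E:=\mathbf 1_{[0,\lambda')}(A)$ has rank $r\le k$, and $a(u,u)\ge\lambda'\|u\|_{\mathcal H}^2$ for all $u$ in the range of $\Id-E$. It suffices to bound each such $\lambda'$ by the right-hand side, the case $\mu_k(\rho)=0$ being trivial.

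\emph{Test functions and a pointwise bound.} For $z\in\R^N$ set $f_z:=e^{iz\cdot x}\cdot\1_{\{\rho>0\}}$. Truncating by smooth cut-offs $\chi_n$ ($\chi_n\equiv1$ on $B_n$, $\mathrm{supp}\,\chi_n\subset B_{2n}$, $|\nabla\chi_n|\le C/n$) and using $\rho\in\L^1(\R^N)$ shows $f_z\in V$, with $\|f_z\|_{\mathcal H}^2=\|\rho\|_1$ and $a(f_z,f_z)=|z|^2\|\rho\|_1$. Put $g_z:=(\Id-E)f_z$ and $m(z):=\|Ef_z\|_{\mathcal H}^2$. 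Since $E$ commutes with $A^{1/2}$ and has range contained in $D(A)$, one computes $\|g_z\|_{\mathcal H}^2=\|\rho\|_1-m(z)$ and $a(g_z,g_z)=|z|^2\|\rho\|_1-\|EA^{1/2}f_z\|_{\mathcal H}^2\le|z|^2\|\rho\|_1$. Combined with $a(g_z,g_z)\ge\lambda'\|g_z\|_{\mathcal H}^2$, this gives for every $z\in\R^N$
\[m(z)\ \ge\ \|\rho\|_1\Big(1-\frac{|z|^2}{\lambda'}\Big)_{+}.\]

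\emph{Averaging over frequencies.} Choose an orthonormal basis $\psi_1,\dots,\psi_r$ of the range of $E$ in $\mathcal H$, so that $m(z)=\sum_{i=1}^r|\langle f_z,\psi_i\rangle_{\mathcal H}|^2$; here $\langle f_z,\psi_i\rangle_{\mathcal H}=\int_{\R^N}\big(\rho\,\overline{\psi_i}\big)(x)\,e^{iz\cdot x}dx$ is a value of the Fourier transform of $\rho\,\overline{\psi_i}\in\L^2(\R^N)$, and $\|\rho\,\overline{\psi_i}\|_{\L^2(\R^N)}^2=\int_{\R^N}\rho^2|\psi_i|^2dx\le\|\rho\|_\infty\|\psi_i\|_{\mathcal H}^2=\|\rho\|_\infty$. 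By Plancherel, $\int_{\R^N}|\langle f_z,\psi_i\rangle_{\mathcal H}|^2dz\le(2\pi)^N\|\rho\|_\infty$, hence $\int_{\R^N}m(z)\,dz\le k(2\pi)^N\|\rho\|_\infty$ (recall $r\le k$). On the other hand, integrating the pointwise bound over the ball of radius $\sqrt{\lambda'}$, which maximizes $\int_{B_R}(1-|z|^2/\lambda')dz$ over $R>0$,
\[\int_{\R^N}m(z)\,dz\ \ge\ \|\rho\|_1\int_{B_{\sqrt{\lambda'}}}\Big(1-\frac{|z|^2}{\lambda'}\Big)dz\ =\ \frac{2\om_N\|\rho\|_1}{N+2}\,(\lambda')^{N/2}.\]
Comparing the two inequalities yields $(\lambda')^{N/2}\le\frac{(N+2)k(2\pi)^N}{2\om_N}\frac{\|\rho\|_\infty}{\|\rho\|_1}$, i.e. $\lambda'\le 4\pi^2\Big(\frac{(N+2)k}{2\om_N}\frac{\|\rho\|_\infty}{\|\rho\|_1}\Big)^{2/N}$; letting $\lambda'\uparrow\mu_k(\rho)$ concludes.

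\emph{Main obstacle.} The single genuinely new point is the one stressed in the introduction: an arbitrary density has no honest Neumann eigenfunctions, so the projection step of Kr\"oger's proof must be carried out with the spectral family of the self-adjoint operator attached to the (always closed, densely defined) weighted Dirichlet form, exploiting that below the $(k+1)$-st min-max value the corresponding spectral subspace has dimension at most $k$. Everything else is routine bookkeeping: the membership $f_z\in V$ of truncated plane waves (which uses only $\rho\in\L^1$), the stability of the form domain under $E$ (valid since $\lambda'<\infty$), and keeping track of the factor $\|\rho\|_\infty$ in the Plancherel estimate — this last being precisely where the ratio $\|\rho\|_\infty/\|\rho\|_1$ originates, and the reason why, by the scale invariance $\mu_k(t\rho)=\mu_k(\rho)$ for $t>0$, one may as well normalize $\|\rho\|_\infty=1$.
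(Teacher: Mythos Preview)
Your approach differs from the paper's: rather than regularizing, you work directly with a self-adjoint operator $A$ on $\mathcal H=L^2(\R^N,\rho\,dx)$ and replace Kr\"oger's projection onto the first $k$ eigenfunctions by the spectral projection $E=\mathbf 1_{[0,\lambda')}(A)$. Granting the existence of $A$, the pointwise bound on $m(z)$, the Plancherel estimate and the averaging are all carried out correctly and yield the stated inequality.

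The gap is the assertion that the form $a(u,u)=\int\rho|\nabla u|^2$ is closable on $\mathcal H$, so that a self-adjoint $A$ exists at all. For arbitrary $\rho\in L^1\cap L^\infty$ this fails --- and the paper's introduction explicitly flags that for general $\rho$ ``there is no interpretation in terms of the spectrum of a well defined operator''. A concrete counterexample (take a product with $[0,1]$ to reach $N\ge 2$): let $\rho=1_C$ with $C\subset[0,1]$ a fat Cantor set, and build $u_n\in C_c^\infty(\R)$ that is linear of slope $1$ on each of the $2^n$ intervals of the $n$-th stage $C_n\supset C$, smoothly joined across the gaps; then $u_n'=1$ a.e.\ on $C$ for every $n$ while $|u_n|\le \ell_n\to 0$ on $C$, so $u_n\to 0$ in $L^2(\rho)$, the sequence is Cauchy in the form norm, yet $a(u_n,u_n)=|C|\not\to 0$. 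Without closability there is no operator $A$, hence no spectral projection, and the key inequality $a(g_z,g_z)\ge\lambda'\|g_z\|_{\mathcal H}^2$ on the orthogonal complement of a $k$-dimensional subspace is unavailable (this is the max--min direction, which in general lies below the min--max defining $\mu_k(\rho)$). This is precisely what the paper's regularization in Lemma~\ref{bmo28} is designed to bypass: it passes to the well-posed pair $(\rho+\varepsilon e^{-|x|^2/2},\,\rho\,1_{B_{1/\varepsilon}}+\varepsilon^2 e^{-|x|^2/2})$, for which the resolvent is compact and honest eigenfunctions exist, runs Kr\"oger's argument there, and only then lets $\varepsilon\to 0$. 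Your spectral-projection device would slot in nicely \emph{after} such a regularization, but on its own it does not cover the full class of densities in the statement.
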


\medskip
\noindent{\it What is the geometry of the optimal densities?} We build up  a numerical approach and give approximations of optimal densities in $\R^2$, for $k=1, \dots, 8$. Contrary to the shape optimization problem \eqref{bmo02}, our numerical procedure is justified and, formally, leads to an approximation of the solution. A natural question is to compare our optimal densities with the previous results for optimal shapes from \cite{AO17,AF12, Be15}. For $k=1,2$  we recover the theoretical results proved in \cite{BuHe19}, while for $k=3, \dots, 8$ the numerical values for the optimal densities are quite close to the optimal values for domains from  \cite{AO17,AF12, Be15}, being, as expected,  larger.  Moreover, the densities are close to be characteristic functions. However, some surprising non simply connected geometries of their level sets  are observed for $k=5$ and $k=8$, suggesting that the numerical optimal shapes previously known could be improved. Although we can not rigorously justify that  the optimal densities do not correspond to  domains for $k=3, \dots, 8$, the numerical computations seem to suggest this fact.

\medskip
The paper is organized as follows. In Section \ref{bmo:s2}  we prove Theorem \ref{bmo06.1}, Section \ref{bmo:s3} contains the analysis of the one dimensional case with the proof of Theorem \ref{bmo20.1}, Section \ref{bmo:s4} contains the discussion about the P\`olya conjecture and the proof of Theorem \ref{bmo33.1} while Section 5 is dedicated to the numerical computations.

\section{Existence result: proof of Theorem \ref{bmo06.1}} \label{bmo:s2}

For every $m>0$ and open set $D \sq \R^N$, let us denote
$$\L^1(D, [0,1]):= \{  \rho : D \ra [0,1]: \rho \in L^1(D)\}$$
$$ \L^1_m(D, [0,1]):= \{  \rho \in  \L^1(D, [0,1]): \int_{D}\rho dx =m\}.$$

In order to analyse existence of a solution to Problem \eqref{bmo01.1} in $\L^1(\R^N, [0,1])$ the main technical difficulty is related to handling the behavior of densities with unbounded support. For instance, if $(\rho_n)_n$ is a maximizing sequence such that, after possible translations, there exists a ball which contains all supports of $\rho_n$, then the existence of an optimal density would follow immediately as a consequence of an upper semicontinuity result for $L^\infty $ weak-$*$ convergence of densities (Lemma \ref{bmo15} below) and the preservation of the constraint at the limit.  Nevertheless, a maximizing sequence could, a priori, have densities with very large, or unbounded support. In this case, the upper semicontinuity result still works, but it is not enough to prove existence. In fact, there is no guarantee that a limit density does satisfy the constraint since the constant function $1$ does not belong to $L^1(\R^N)$. In other words, to prove that the constraint is satisfied by a limit density one should gather qualitative information  on the maximizing sequence itself. Our idea is to prove that a maximizing sequence enjoys some mass concentration properties around at most $k$ spots.  Contrary to the case of Dirichlet boundary conditions we can not perform a surgery of the domains in order to remove possibly insignificant parts of a density (for instance long and thin tails of their support). This is a consequence of the max-min-max structure of Problem \eqref{bmo01.1}.

There is a second issue related to the behavior of the spectrum for disconnected sets. The maximizer of $\mu_2$ is known to be the union of two equal balls. The spectrum of the union of balls is the union of spectra of each ball, multiplicity being counted.
 In case of densities, we have to mimic this behaviour in terms of a {\it collection} of densities, since the notion of connectedness is somehow unclear.

 We begin with  technical upper semicontinuity result. Below, by convention, if $\rho=0$ then $\forall k \ge 0, \mu_k(\rho ) =+\infty$.
\begin{lemma}\label{bmo15.1}
Assume $\rho ,\rho _n \  \in L^1(\R^N, [0,1]) $ satisfy $\rho _n \rau \rho $ weak-* in $L^\infty (\R^N)$. Then
$$\forall k \ge 1, \;\mu_k(\rho ) \ge \limsup_\nif \mu_k(\rho_ n ).$$
\end{lemma}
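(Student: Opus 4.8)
The plan is to transfer a near-optimal test subspace for $\mu_k(\rho)$ into an admissible competitor for $\mu_k(\rho_n)$, using the weak-$*$ convergence only through the elementary fact that for every fixed $v\in C^\infty_c(\R^N)$ one has $\int_{\R^N}\rho_n v^2\,dx \to \int_{\R^N}\rho v^2\,dx$ and $\int_{\R^N}\rho_n |\nabla v|^2\,dx \to \int_{\R^N}\rho |\nabla v|^2\,dx$ (both $v^2$ and $|\nabla v|^2$ belong to $L^1(\R^N)$, so testing against them is legitimate). I would first dispose of the trivial cases: if $\rho\equiv 0$ or $\mu_k(\rho)=+\infty$ the inequality holds by the stated convention, so from now on assume $\mu_k(\rho)<+\infty$.

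Next, fix $\e>0$ and choose $S\in{\mathcal S}_{k+1}$ with $\max_{u\in S\setminus\{0\}}\frac{\int_{\R^N}\rho|\nabla u|^2\,dx}{\int_{\R^N}\rho u^2\,dx}\le \mu_k(\rho)+\e$, and pick $v_1,\dots,v_{k+1}\in C^\infty_c(\R^N)$ whose restrictions $v_i\cdot 1_{\{\rho>0\}}$ form a basis of $S$. The key algebraic observation is that $W:=\mathrm{span}(v_1,\dots,v_{k+1})\subset C^\infty_c(\R^N)$ then has dimension exactly $k+1$ and $v\mapsto v\cdot 1_{\{\rho>0\}}$ is a linear isomorphism $W\to S$; in particular the quadratic form $b(v):=\int_{\R^N}\rho v^2\,dx$ is positive definite on $W$ (if $b(v)=0$ then $v=0$ a.e. on $\{\rho>0\}$, so its class in $S$ vanishes, so $v=0$), and, using that $\nabla v=0$ a.e. on $\{v=0\}$, the Rayleigh quotient of any $v\in W$ is well defined and equals that of the corresponding element of $S$.

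Then I would run the standard finite-dimensional perturbation argument on the fixed space $W$: the quadratic forms $a_n(v):=\int_{\R^N}\rho_n|\nabla v|^2\,dx$ and $b_n(v):=\int_{\R^N}\rho_n v^2\,dx$ converge, entrywise on the basis (and on cross terms), hence uniformly on the unit sphere of $W$, to $a(v):=\int_{\R^N}\rho|\nabla v|^2\,dx$ and $b(v)$ respectively. Since $b$ is positive definite on $W$, for $n$ large $b_n$ is positive definite on $W$ as well, so $v\mapsto v\cdot 1_{\{\rho_n>0\}}$ is injective on $W$ and $W\cdot 1_{\{\rho_n>0\}}\in{\mathcal S}_{k+1}$ is an admissible competitor for $\mu_k(\rho_n)$; hence $\mu_k(\rho_n)\le\max_{v\in W\setminus\{0\}}a_n(v)/b_n(v)$ for such $n$. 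Uniform convergence on the unit sphere together with positive definiteness of the limit denominator gives $\limsup_\nif\max_{v\in W\setminus\{0\}}a_n(v)/b_n(v)\le\max_{v\in W\setminus\{0\}}a(v)/b(v)\le\mu_k(\rho)+\e$, whence $\limsup_\nif\mu_k(\rho_n)\le\mu_k(\rho)+\e$; letting $\e\to0$ concludes.

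The one step I expect to require genuine care — rather than routine bookkeeping — is the admissibility claim, namely that restricting $W$ to $\{\rho_n>0\}$ does not drop its dimension below $k+1$. This is precisely why one should work with a subspace of dimension exactly $k+1$ on which the \emph{limit} form $b$ is positive definite (which then forces $b_n$ to be positive definite for large $n$), rather than with an arbitrary finite-dimensional space of smooth test functions; everything else reduces to continuity of generalized Rayleigh quotients in finite dimensions.
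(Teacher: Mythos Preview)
Your proposal is correct and follows essentially the same approach as the paper: fix a near-optimal $(k+1)$-dimensional test space built from smooth functions, use weak-$*$ convergence to pass the Rayleigh quotients to the limit on this fixed finite-dimensional space, and let $\varepsilon\to 0$. The paper extracts a convergent subsequence of the maximizing coefficient vectors, whereas you phrase the same finite-dimensional step as uniform convergence of the quadratic forms on the unit sphere; your treatment of the admissibility point (that $W\cdot 1_{\{\rho_n>0\}}$ retains dimension $k+1$ because the limit form $b$ is positive definite on $W$) is more explicit than the paper's, which simply asserts it.
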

\begin{proof}
The proof of the result is standard.   Assume $\rho  \not= 0$, otherwise the inequality is trivially true. Let $\vps >0$ be fixed and let
$S= \operatorname{span}\{u_01_{\{\rho  >0\}} , \dots, u_k1_{\{\rho  >0\}} \}$ with $ u_0, \dots, u_k  \in C^\infty _c (\R^N)$ be  {an admissible subspace for the computation of $\mu_k(\rho )$} such that
$$\mu_k(\rho ) \ge \max _{u\in S \setminus\{0\}} \frac{ \int_{\R^N} |\nabla u|^2 \rho  dx}{ \int_{\R^N} u^2 \rho  dx}-\vps.$$
For each index $n$, assume that
$$
u_n:=\sum_{i=0}^k \alpha_i^n u_i
$$
attains the maximum of
$$
\max _{u\in S^n \setminus\{0\}} \frac{ \int_{\R^N} |\nabla u|^2 \rho_n dx}{ \int_{\R^N} u^2 \rho_n dx},
$$
where    $S^n = \operatorname{span}\{u_0 1_{\rho_n >0} , \dots, u_k 1_{\rho_n >0} \}$.   Note that for $n$ large, this space is   of dimension $k+1$ so that $\mu_k(\rho_n) \le \max _{u\in S^n \setminus\{0\}} \frac{ \int_{\R^N} |\nabla u|^2 \rho_n dx}{ \int_{\R^N} u^2 \rho_n dx}$. Without restricting the generality, we may assume that
$$
\sum_{i=0}^k (\alpha_i^n)^2=1, \qquad \;\alpha_i^n \ra \alpha_i.
$$
Denoting $ \tilde u:=\sum_{i=0}^k \alpha_i u_i$, we have

\begin{equation}
\label{bmo16}
\lim_\nif \int_{\R^N} |\nabla u_n|^2 \rho_n dx =  \int_{\R^N} |\nabla \tilde u|^2 \rho dx\qquad\text{and}\qquad
 \lim_\nif \int_{\R^N} u_n^2 \rho_n dx = \int_{\R^N}  \tilde u^2 \rho dx,
 \end{equation}
so that
$$\mu_k(\rho ) + \vps \ge \limsup_\nif \mu_k(\rho_n).$$
Taking $\vps \ra 0$, we conclude the proof.
\end{proof}

\smallskip
\noindent{\bf Collection of densities.}
Given the non-zero densities $\rho_1, \rho_2, \dots, \rho_j$, we formally denote their collection as
$$\ov \rho = \rho_1\sqcup \rho_2\sqcup  \dots\sqcup  \rho_j$$
and define the eigenvalues of  the  collection,  as follows. We consider the family
of eigenvalues of each density, take their union
$$\cup_{i=1}^j \{\mu_k(\rho_i) : k\ge 0\},$$
keep track of multiplicity and relabel them $\mu_k(\ov \rho)$ in increasing order. It can be easily noticed that $\mu_0(\ov \rho)= \mu_1(\ov \rho)= \dots =\mu_{j-1}(\ov \rho)=0$.

Of course, if the supports of $\rho_i$ are all bounded, then we can build a density $\rho $ in $\R^N$ such that $\mu_k(\rho)= \mu_k(\ov \rho)$ by just translating the supports of each density, to place them pairwise at strictly positive distance. If at least one support is unbounded, we are not, in general, able to build such a density.

Let $\ov \rho = \rho_1\sqcup \rho_2\sqcup \dots \sqcup \rho_j$ and let $\ov \rho '=\rho_1\sqcup \rho_2\sqcup \dots \sqcup \rho_{j-1}$. Then, from the definition,
$$\forall k \ge 1, \mu_k(\ov \rho) \le  \mu_k(\ov \rho').$$
In other words, if we drop a component of $\ov \rho$, the eigenvalues can not decrease. We have the following.

\begin{lemma}\label{bmo17}
Assume $\rho_n \in \L^1 (\R^N,[0,1]) $ is such that $\rho_n = \rho_n^0+\rho_n^1+\dots +\rho_n^{j}$, $\int_{\R^N}\rho_n^0 dx \ra 0$, and  for every $l =1, \dots, j$, for some sequences $(y_n^l)_n$ we have $\rho_n^l(y_n^l +\cdot)\rau \rho^l$ weak-* in $L^\infty (\R^N)$. Assume that for all $1 \le l \not =h\le j$, $dist(\{\rho_n^l>0\} , \{\rho_n^h>0\}) \ra + \infty$. Then, denoting
$\ov \rho =\rho_1\sqcup \rho_2\sqcup \dots \sqcup \rho_j$, we have
$$\forall k \ge 1, \;\mu_k(\ov \rho) \ge \limsup_\nif \mu_k(\rho_n).$$
\end{lemma}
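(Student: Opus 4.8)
The plan is to follow the scheme of Lemma \ref{bmo15.1}: I would produce, for $n$ large, an admissible subspace of dimension $k+1$ for the computation of $\mu_k(\rho_n)$ on which every Rayleigh quotient is bounded by $\mu_k(\ov\rho)$, up to an error vanishing as $n\to+\infty$. To set it up, recall that each $\mu_0(\rho^l)=0$ and that $\mu_k(\ov\rho)$ is the $(k+1)$-th smallest element (counted with multiplicity) of $\bigcup_{l=1}^j\{\mu_m(\rho^l):m\ge 0\}$. Hence one may choose a subset $J\subseteq\{1,\dots,j\}$ and integers $m_l\ge 0$ for $l\in J$ with $\sum_{l\in J}(m_l+1)=k+1$ and $\mu_{m_l}(\rho^l)\le\mu_k(\ov\rho)$ for all $l\in J$: for $k\ge j-1$ take $J=\{1,\dots,j\}$ and read off an initial segment of each component's spectrum; for $k\le j-2$, where $\mu_k(\ov\rho)=0$, take any $J$ of cardinality $k+1$ and all $m_l=0$. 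Fix $\varepsilon>0$; for each $l\in J$ pick $w_0^l,\dots,w_{m_l}^l\in C_c^\infty(\R^N)$ such that $\operatorname{span}\{w_s^l\,1_{\{\rho^l>0\}}:0\le s\le m_l\}$ is an $(m_l+1)$-dimensional admissible subspace realising $\mu_{m_l}(\rho^l)$ up to $\varepsilon$; finally choose $R>0$ so large that every $w_s^l$ is supported in the ball $B_R$ and $\int_{B_R}\rho^l>0$ for all $l\in J$.

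The decisive step is to control the supports. From $\rho_n^l(y_n^l+\cdot)\rau\rho^l$ weak-$*$ we get $\int_{B_R}\rho_n^l(y_n^l+\cdot)\to\int_{B_R}\rho^l>0$, so for $n$ large $y_n^l+B_R$ meets $\{\rho_n^l>0\}$ and, having diameter $2R$, lies in the $2R$-neighbourhood of $\{\rho_n^l>0\}$; since $dist(\{\rho_n^l>0\},\{\rho_n^h>0\})\to+\infty$ for $l\neq h$, there is $n_0$ such that for $n\ge n_0$ the balls $\{y_n^l+B_R\}_{l\in J}$ are pairwise disjoint and $(y_n^l+B_R)\cap\{\rho_n^h>0\}=\emptyset$ for all $h\neq l$, $h\in\{1,\dots,j\}$. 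For such $n$ I would take $S_n:=\operatorname{span}\{w_s^l(\cdot-y_n^l)\,1_{\{\rho_n>0\}}:l\in J,\ 0\le s\le m_l\}$. For $u=\big(\sum_{l\in J}\Phi_l\big)\,1_{\{\rho_n>0\}}$ with $\Phi_l:=\sum_s\alpha_s^l\,w_s^l(\cdot-y_n^l)$ and $\sum_{l,s}(\alpha_s^l)^2=1$, the separation makes $\Phi_l$ vanish on $\{\rho_n^h>0\}$ for $h\neq l$, so that
\[
\int_{\R^N}\rho_n|\nabla u|^2=\int_{\R^N}\rho_n^0\Big|\nabla\!\sum_{l\in J}\Phi_l\Big|^2+\sum_{l\in J}\int_{\R^N}\rho_n^l|\nabla\Phi_l|^2,\qquad \int_{\R^N}\rho_n u^2\ge\sum_{l\in J}\int_{\R^N}\rho_n^l\,\Phi_l^2 .
\]
The first term on the right of the equality is $\le\delta_n:=C\int_{\R^N}\rho_n^0\to 0$, with $C$ depending only on the finitely many fixed functions $w_s^l$. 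For each $l$, translating by $y_n^l$ and using the weak-$*$ convergence of the finitely many associated Gram and stiffness entries to those of $\rho^l$ (as in Lemma \ref{bmo15.1}), the near-optimality inequality for $\rho^l$ upgrades to matrix inequalities yielding, for $n$ large and uniformly in $\alpha^l$, $\int\rho_n^l|\nabla\Phi_l|^2\le(\mu_k(\ov\rho)+2\varepsilon)\int\rho_n^l\Phi_l^2$ and $\int\rho_n^l\Phi_l^2\ge c\,|\alpha^l|^2$ for some fixed $c>0$ (the limiting Gram matrix is positive definite). In particular $\sum_{l\in J}\int\rho_n^l\Phi_l^2\ge c$, which also shows $\dim S_n=k+1$; dividing,
\[
\frac{\int_{\R^N}\rho_n|\nabla u|^2}{\int_{\R^N}\rho_n u^2}\ \le\ \mu_k(\ov\rho)+2\varepsilon+\frac{\delta_n}{c}\qquad\text{uniformly in }u\in S_n\setminus\{0\}.
\]
Hence $\mu_k(\rho_n)\le\mu_k(\ov\rho)+2\varepsilon+\delta_n/c$; letting $n\to+\infty$ and then $\varepsilon\to 0$ yields the claim.

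I expect the support-separation step to be the only genuinely delicate point: everything after it is the bookkeeping already present in Lemma \ref{bmo15.1}, but it is exactly there that the hypothesis $dist(\{\rho_n^l>0\},\{\rho_n^h>0\})\to+\infty$ is used — together with the weak-$*$ convergence, which pins enough mass of $\rho_n^l$ near $y_n^l$ — to ensure that a test function tailored to the profile $\rho^l$ interacts neither with the other pieces nor with the mass-vanishing remainder $\rho_n^0$, so that the Rayleigh quotient of $\rho_n$ decouples into a sum of local quotients. A secondary care point is keeping the error estimate and the lower bound $\int\rho_n^l\Phi_l^2\ge c|\alpha^l|^2$ uniform in the coefficients, which is what converts the bound on $\max_{S_n}$ into a bound on $\mu_k(\rho_n)$.
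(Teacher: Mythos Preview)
Your proposal is correct and follows essentially the same route as the paper: choose near-optimal test subspaces for each limit profile $\rho^l$, translate them by $y_n^l$, use the diverging distances to make the supports disjoint, and reduce to the computation of Lemma~\ref{bmo15.1}. You are more explicit than the paper on three points --- the support-separation argument (using the weak-$*$ convergence to pin $y_n^l+B_R$ near $\{\rho_n^l>0\}$), the contribution of $\rho_n^0$ (which you isolate as $\delta_n\to 0$), and the uniformity in the coefficients via convergence of the finite Gram and stiffness matrices --- whereas the paper simply writes ``following the same arguments as in Lemma~\ref{bmo15.1}''; but the underlying strategy is identical.
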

\begin{proof}
By definition, there exists $k_1, \dots, k_j$ such that $k_1+\cdots+ k_j +(j-1)=k$ and
$$\mu_k (\ov \rho)= \mu_{k_1}(\rho_1),$$
$$\forall l=1, \dots, j, \, \mu_{k_l+1}(\rho_l) \ge \mu_k(\ov \rho)\ge \mu_{k_l}(\rho_l).$$
We follow the same arguments as in Lemma \ref{bmo15.1}. For some $\vps >0$, we choose
$\forall l=1, \dots, j$,   $u^l_0,\dots, u^l_{k_l}$, such that $S^l_{k_l+1}= \operatorname{span}\{u^l_01_{\rho_l >0} , \dots, u^l_{k_l}1_{\rho_l >0} \}$ is of dimension $k_l+1$ and satisfies
$$\mu_{k_l}(\rho_l) \ge \max _{u\in S^l_{k_l+1}\setminus\{0\}} \frac{ \int_{\R^N} |\nabla u|^2 \rho_l dx}{ \int_{\R^N} u^2 \rho_l dx}-\vps.$$
For each $l=1, \dots, j$, for each index $n$,
we consider the set of test functions $\{u_i^l (\cdot-y_n^l): l=1, \dots, j, i=0, \dots, k_l\}$. For $n$ large enough, the dimensions of $S^{n,l}_{k_l+1}= \operatorname{span}\{u^l_0 (\cdot-y_n^l)1_{\rho^l_n >0} , \dots, u^l_{k_l} (\cdot-y_n^l)1_{\rho^l_n >0} \}$ equal ${k_l+1}$. Since the supports of $u^l_i$ are bounded and $dist(\{\rho_n^l>0\} , \{\rho_n^h>0\}) \ra + \infty$, the space
$$S^n=\operatorname{span}\{u_i^l (\cdot-y_n^l)1_{\rho_n>0}: l=1, \dots, j, i=0, \dots, k_l\}$$
is of dimension $k+1$ and  for every $l\not= s$ the supports of $u^l_i (\cdot-y_n^l)$ and $u^s_r(\cdot-y_n^s)$ are disjoint so that

$$\mu_k(\rho_n) \le \max _{u \in S^n\setminus\{0\}} \frac{ \int_{\R^N} |\nabla u|^2 \rho_n dx}{ \int_{\R^N} u^2 \rho_n dx}.
$$
Following the same arguments as in Lemma \ref{bmo15.1} applied for every $l=1, \dots, j$, we conclude the proof.

\end{proof}

In order to prove Theorem \ref{bmo06.1}, we  formulate the following problem
\begin{equation} \label{bmo05}
\max \{ \mu_k(\ov \rho) :  \ov \rho = \rho_1\sqcup \rho_2\sqcup \dots \sqcup \rho_k, \rho_j : \R^N \ra [0,1], \sum _{j=1}^k \int_{\R^N}\rho_j dx =m\}.
\end{equation}
Of course, a solution may have $k-1$ vanishing densities, in which case their eigenvalues equal $+\infty$ and they do not contribute to the computation of $\mu_k(\ov \rho)$.   Roughly speaking, the number of nonzero densities in the maximization of $\mu_k$ can not exceed $k$, otherwise $\mu_k$ equals to $0$. The number of nonvanishing densities mimics the number of the connected components of an optimal set.

\noindent{\bf Proof of Theorem \ref{bmo06.1}.}
The case $k=1,2$ is known from \cite{BuHe19}. Let fix $k\ge 3$ and note that if we consider two different values of $m$, the solutions will be the same, up to some rescaling.  In order to prove Theorem \ref{bmo06.1}, we start with several technical results. We recall the following result from \cite[Corollary 3.12]{GNY2004}.

\medskip
\begin{lemma}[Grigor'yan-Netrusov-Yau]
 \label{bmo07}
 Let $ \rho \in \L^1_m(\R^N, [0,1])$. There exists a dimensional constant $c_N$ and $k$ annuli $(A_{x_i, r_i, R_i})_{i=1,k}$ such that
 $$\forall i=1, \dots, k, \; \int_{A_{x_i, r_i, R_i}} \rho dx \ge \frac{c_Nm}{k},$$
 $$(A_{x_i, \frac {r_i}{2}, 2R_i})_{i=1,k} \mbox{ are pairwise disjoint}.$$
 \end{lemma}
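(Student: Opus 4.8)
\emph{Sketch.} The plan is to obtain the statement as the special case $X=\R^N$ of the abstract covering theorem of Grigor'yan, Netrusov and Yau, so that the only work is to verify its hypotheses and to match conventions. Recall that \cite[Corollary 3.12]{GNY2004} applies to a metric space $(X,d)$ carrying a finite, non-atomic Borel measure $\nu$, under the structural hypothesis that there is a constant $\Gamma$ such that every ball of radius $r$ can be covered by at most $\Gamma$ balls of radius $r/2$. Under these assumptions, for every $k\in\N$ one can find $k$ ``annuli'' $A_1,\dots,A_k$ --- where an annulus means a set $A_{x,r,R}:=B(x,R)\sm\overline{B(x,r)}$, the degenerate cases $r=0$ (a ball) and $R=+\infty$ (the exterior of a ball) being allowed --- such that $\nu(A_i)\ge c\,\nu(X)/k$ for a constant $c=c(\Gamma)$, while the dilated annuli $A_{x_i,r_i/2,2R_i}$ are pairwise disjoint.

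It then remains to check that $X=\R^N$ with the Euclidean distance and the measure $\nu$ defined by $\d\nu=\rho\, dx$ fit this framework. First, the covering hypothesis holds in $\R^N$ with a purely dimensional constant $\Gamma=\Gamma(N)$: one may take $\Gamma(N)=5^N$, covering $B(x,r)$ by the balls of radius $r/2$ centered at a maximal $\tfrac r2$-separated subset of $B(x,r)$ and bounding their number by the usual volume-packing argument. Second, since $\rho\in\L^1_m(\R^N,[0,1])$, the measure $\nu=\rho\, dx$ is absolutely continuous with respect to Lebesgue measure --- hence non-atomic --- and $\nu(\R^N)=\int_{\R^N}\rho\, dx=m<+\infty$. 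Applying \cite[Corollary 3.12]{GNY2004} therefore yields annuli $(A_{x_i,r_i,R_i})_{i=1}^{k}$ with $\int_{A_{x_i,r_i,R_i}}\rho\, dx\ge c_N m/k$, where $c_N:=c(\Gamma(N))$ is dimensional, and with $(A_{x_i,r_i/2,2R_i})_{i=1}^{k}$ pairwise disjoint, which is exactly the assertion.

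There is no genuine analytic difficulty on our side: the substance lies entirely in \cite{GNY2004}, whose proof proceeds by induction on $k$ combined with a dichotomy for the mass-distribution function $r\mapsto\nu(B(x_0,r))$ --- either a definite fraction of the mass concentrates inside a controlled ball, so that one recurses on the complement, or the mass is spread out and a single annulus already captures a fixed share. The only points requiring attention are bookkeeping ones: recording that the covering constant of $\R^N$ depends on $N$ alone (so that $c_N$ is truly dimensional), identifying the dilation factor in \cite[Corollary 3.12]{GNY2004} with the value $2$ that produces the radii $r_i/2$ and $2R_i$, and keeping the degenerate annuli --- balls and exteriors of balls --- admissible, the latter being indispensable later on when part of the mass of a maximizing sequence escapes to infinity.
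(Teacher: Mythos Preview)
Your proposal is correct and matches the paper's approach exactly: the paper does not prove this lemma at all but simply recalls it as \cite[Corollary 3.12]{GNY2004}, and what you have written is precisely the verification that the hypotheses of that abstract result are satisfied for $X=\R^N$ with $\nu=\rho\,dx$. Your added remarks on the dimensional covering constant, the non-atomicity of $\nu$, and the admissibility of degenerate annuli are useful bookkeeping that the paper leaves implicit.
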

 Below, we give a first result relating the value of the $k$-th eigenvalue of $\rho$ to its concentration of mass.
 \begin{lemma}[Geometric control of the spectrum]
 \label{bmo08}
  Let $\rho\in \L^1_m(\R^N, [0,1])$   such that $\mu_k(\rho) >0$. There exists a ball $B_{x, R^*}$ with
 $$R^*=\sqrt{\frac{4(k+1)}{c_N \mu_k(\rho)}}$$
 such that
 $$\int_{B_{x,R^*}}\rho dx \ge \frac{c_Nm}{k+1}.$$
\end{lemma}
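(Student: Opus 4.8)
The plan is to use the Grigor'yan--Netrusov--Yau result (Lemma \ref{bmo07}) together with the variational characterization of $\mu_k(\rho)$. Apply Lemma \ref{bmo07} with $k$ replaced by $k+1$: this yields $k+1$ annuli $A_{x_i,r_i,R_i}$, $i=1,\dots,k+1$, each carrying mass at least $\frac{c_N m}{k+1}$, and such that the dilated annuli $A_{x_i,r_i/2,2R_i}$ are pairwise disjoint. On each (inner) annulus $A_{x_i,r_i,R_i}$ I would build a Lipschitz test function $\varphi_i$ which equals $1$ on $A_{x_i,r_i,R_i}$, is supported in $A_{x_i,r_i/2,2R_i}$, and has gradient bounded by a constant over $R_i$ (a standard radial cutoff, decaying linearly between the inner radii $r_i/2$ and $r_i$, and between the outer radii $R_i$ and $2R_i$). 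Because the enlarged annuli are pairwise disjoint, the $\varphi_i$ have disjoint supports; hence the span of $\{\varphi_i 1_{\{\rho>0\}}\}_{i=1}^{k+1}$ is a $(k+1)$-dimensional subspace admissible in \eqref{bmo03.0}, so
$$\mu_k(\rho) \le \max_{i=1,\dots,k+1} \frac{\int_{\R^N}|\nabla \varphi_i|^2 \rho\, dx}{\int_{\R^N}\varphi_i^2 \rho\, dx}.$$

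Fix the index $i$ realizing this maximum. The denominator is at least $\int_{A_{x_i,r_i,R_i}}\rho\, dx \ge \frac{c_N m}{k+1}$. The numerator is controlled by $\|\nabla \varphi_i\|_\infty^2 \int_{A_{x_i,r_i/2,2R_i}}\rho\, dx$. Here I would need a gradient bound of the form $|\nabla \varphi_i| \le C/R_i$ on the \emph{outer} transition region and $|\nabla \varphi_i|\le C/r_i$ on the inner one; combining with $\rho \le 1$ and the volume of the annulus (which scales like $R_i^N$), one obtains a bound of the shape $\mu_k(\rho) \le \frac{C' (k+1)}{c_N m}\, R_i^{N-2}$ or similar. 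This by itself does not immediately give a clean $R^*$; the cleaner route, which I expect the authors take, is to bound the numerator directly by $\frac{C}{R_i^2}\int \rho\,dx \le \frac{C}{R_i^2}\cdot(\text{something})$ and to argue instead via the \emph{ratio} so that the $\int \rho$ in numerator and denominator partly cancel, leaving $\mu_k(\rho)\le \frac{c(k+1)}{c_N m}\cdot\frac{1}{R_i^2}\cdot m'$ — i.e. one wants the constant in front of $1/R_i^2$ to absorb cleanly. Rearranging $\mu_k(\rho) \le \frac{4(k+1)}{c_N R_i^2}$ (the precise constant $4$ being produced by the specific cutoff) gives $R_i^2 \le \frac{4(k+1)}{c_N \mu_k(\rho)} = (R^*)^2$, so $2R_i \le 2R^*$.

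It then remains to locate the mass inside a genuine ball. The annulus $A_{x_i,r_i,R_i}$ is contained in the ball $B_{x_i,R_i}$, which is contained in $B_{x_i,R^*}$ since $R_i \le R^*$. Therefore $\int_{B_{x_i,R^*}}\rho\, dx \ge \int_{A_{x_i,r_i,R_i}}\rho\, dx \ge \frac{c_N m}{k+1}$, and we may take $x=x_i$. The main obstacle in this argument is bookkeeping the cutoff so that the gradient contribution is genuinely controlled by $1/R_i^2$ (and not worsened by the inner radius $r_i$, which could be tiny): one must choose $\varphi_i$ to be constant equal to $1$ already on the inner ball $B_{x_i,R_i}$ rather than only on the annulus, i.e. not cut near $r_i$ at all, so that only the outer transition $[R_i,2R_i]$ contributes to $\nabla \varphi_i$ — this is legitimate precisely because the enlarged annuli, hence the balls $B_{x_i,2R_i}$, remain pairwise disjoint, which is exactly the disjointness conclusion of Lemma \ref{bmo07}. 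With that choice the estimate $|\nabla\varphi_i|\le C/R_i$ holds everywhere and the computation closes with the stated value of $R^*$.
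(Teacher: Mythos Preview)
Your argument has a genuine gap at the final step. You write that choosing $\varphi_i$ equal to $1$ on the whole ball $B_{x_i,R_i}$ (rather than only on the annulus) is ``legitimate precisely because the enlarged annuli, hence the balls $B_{x_i,2R_i}$, remain pairwise disjoint''. This implication is false: Lemma~\ref{bmo07} only asserts that the enlarged \emph{annuli} $A_{x_i,r_i/2,2R_i}$ are pairwise disjoint, and an annulus has a hole. Nothing prevents one annulus $A_{x_j,r_j/2,2R_j}$ from sitting entirely inside the inner ball $B_{x_i,r_i/2}$ of another; in that case the enlarged annuli are disjoint but $B_{x_j,2R_j}\subset B_{x_i,2R_i}$, so the filled balls overlap and your $\varphi_i$'s no longer have disjoint supports. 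Thus you cannot simply drop the inner cutoff, and the $1/r_i$ contribution to the gradient does not disappear.

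The paper handles exactly this nesting obstruction. It first performs a selective filling: an annulus is replaced by the ball $B_{x_i,R_i}$ only when its inner ball $B_{x_i,r_i}$ contains no other annulus of the family (so filling preserves disjointness). The key observation is then that whenever $r_i>0$ survives, some other $R_j$ must satisfy $R_j<r_i$ (because something lives inside the hole). Hence the global minimum
\[
R^*=\min\bigl(\{r_i:r_i>0\}\cup\{R_j:j=1,\dots,k+1\}\bigr)
\]
is always realized by some outer radius $R_j$. Using the (unfilled) annulus-supported cutoffs, every $|\nabla\varphi_i|$ is bounded uniformly by $2/R^*$, which yields $\mu_k(\rho)\le 4(k+1)/(c_N(R^*)^2)$. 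Since $R^*=R_j$ for some $j$ and $A_{x_j,r_j,R_j}\subset B_{x_j,R_j}=B_{x_j,R^*}$, the mass bound on the ball follows. The missing idea in your attempt is precisely this minimum--radius argument that converts the annulus information into a ball without assuming the balls are disjoint.
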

The meaning of this result is the following: if $\mu_k(\rho)$ is large (as we expect it as a maximizer), then an important fraction of the mass of $\rho$ concentrates on a ball of small, controlled radius $R^*$.
\begin{proof}
We start by applying Lemma \ref{bmo07} for $k+1$ and get the   annuli $(A_{x_i, r_i, R_i})_{i=1,k+1}$ such that $(A_{x_i, \frac {r_i}{2}, 2R_i})_{i=1,k+1} $ are pairwise disjoint and
 $$\forall i=1, \dots, k+1, \; \int_{A_{x_i, r_i, R_i}} \rho dx \ge \frac{c_Nm}{k+1}.$$
 We perform the following transformation of the annuli: if $A_{x_i, r_i, R_i}$ does not contain another annulus inside (i.e. inside the ball $B_{x_i,r_i}$) we fill it and replace it with the ball $B_{x_i, R_i}$ (roughly speaking corresponding to $r_i=0$). The statement of Lemma \ref{bmo07} continues to be valid. From now on, we work with this new family of annuli.

 We claim that for every $r_i>0$ there exists some $R_j$ such that
 \begin{equation}
\label{bmo11}
 R_j < r_i.
 \end{equation}
 Indeed, since $r_i >0$, in view of our transformation above, inside the ball $B_{x_i, r_i}$ there should be another annulus $A_{x_j, r_j, R_j}$, so that $R_j <r_i$.

 Let us denote
 $$R^* =\min \{r_i, r_i >0\} \cup \{R_j: j=1, \dots, k+1\}.$$
 In view of the previous observation, $R^*$ equals some $R_j$.
 We build the following test functions.
 \begin{itemize}
 \item On an annulus $A_{x_i, \frac {r_i}{2}, 2R_i}$
 \begin{equation}
\label{bmo09}
\vphi_i(x)=
\begin{cases}
1&\hbox{if }x \in A_{x_i, r_i, R_i}\\
\frac{d(x,B_{x_i, \frac{r_i}{2}})}{\frac{r_i}{2}}&\hbox{if  }x \in A_{x_i, \frac{r_i}{2}, r_i}\\
\frac{d(x,B^c_{x_i, 2R_i})}{R_i}&\hbox{if  }x \in A_{x_i, R_i, 2R_i}\\
0&\mbox{elsewhere}.
\end{cases}
\end{equation}
\item On a ball $B_{x_i, R_i}$
 \begin{equation}
\label{bmo10}
\vphi_i(x)=
\begin{cases}
1&\hbox{if }x \in B_{x_i, R_i}\\
\frac{d(x,B^c_{x_i, 2R_i})}{R_i}&\hbox{if  }x \in A_{x_i, R_i, 2R_i}\\
0&\mbox{elsewhere}.
\end{cases}
\end{equation}
 \end{itemize}
 Then for every $\vphi$ defined above we have
 $$\forall x \in \R^N, |\nabla \vphi (x)|\le \frac{2}{R^*},$$
 $$\int |\nabla \vphi |^2 \rho dx \le \frac{4m}{(R^*)^2},$$
 $$\int   \vphi ^2 \rho dx \ge \frac{c_Nm}{k+1}.$$
 Since all functions $\vphi_i$ have pairwise disjoint support, we can use them as test for $\mu_k$ and get
 $$\mu_k(\rho) \le \frac{\frac{4m}{(R^*)^2}}{\frac{c_Nm}{k+1}}.$$
 This implies
 $$R^*\le \sqrt{\frac{4(k+1)}{c_N \mu_k(\rho)}}.$$
 In view of \eqref{bmo11}, $R^*$ is attained by some $R_j$, so that the ball $B_{x_j, R_j}$ satisfies the conclusion of the lemma.
\end{proof}
\begin{lemma}[Enhanced geometric control of the spectrum]
 \label{bmo12}
 Let $\rho \in \L^1_m (\R^N, [0,1])$  such that $\rho=\rho_0+\rho_1+\dots +\rho_{j}$ and let $R>0$. Assume that
 $$\forall 1\le l\not= i\le j, \;\; dist (\{\rho_l >0\}, \{\rho_i >0\}) \ge 3R.$$
  Moreover, assume $\forall 1\le l \le j$, $m_l=\int \rho_l dx >0$ and denote $m_0= \int \rho_0 dx$.

 Then, for every $l\in 1, \dots, j$, there exists $R_l^*>0$ and $ x_l\in \R^N$ satisfying
  \begin{equation}
\label{bmo13}
\frac{1}{R_l^*} \ge \Big [ \frac 12 \Big ( \frac{\mu_k(\rho) c_N m_l}{(k+1)(m_l+m_0)} \Big )^\frac12-\frac{1}{2R} \Big] ^+
 \end{equation}
 and
 $$\int_{B_{x_l, R_l^*}} \rho_l \ge \frac{c_Nm_l}{k+1}.$$
 \end{lemma}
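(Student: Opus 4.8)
The plan is to localize the argument of Lemma~\ref{bmo08} to the single component $\rho_l$, the price being measured by the mass $m_0$ of the ``junk'' part $\rho_0$ and by the separation radius $3R$. The point throughout is that we test $\mu_k(\rho)$ (not $\mu_k(\rho_l)$) with functions that are essentially supported in a $2R$-neighbourhood of $\{\rho_l>0\}$, so that only $\rho_l$ and $\rho_0$ can contribute to the relevant integrals.

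Fix $l\in\{1,\dots,j\}$. Since $0\le\rho_l\le\rho\le1$ we have $\rho_l\in\L^1_{m_l}(\R^N,[0,1])$, so Lemma~\ref{bmo07} applied to $\rho_l$ with $k+1$ in place of $k$ produces $k+1$ annuli $(A_{x_i,r_i,R_i})_{i=1,\dots,k+1}$ with $\int_{A_{x_i,r_i,R_i}}\rho_l\ge\frac{c_Nm_l}{k+1}$ and $(A_{x_i,r_i/2,2R_i})_i$ pairwise disjoint. I would then repeat the ``filling'' step of the proof of Lemma~\ref{bmo08} (replace an annulus enclosing no other by the corresponding ball; all the properties above are preserved, and each base object still carries $\rho_l$-mass $\ge\frac{c_Nm_l}{k+1}$) and set $R^*:=\min\big(\{r_i:r_i>0\}\cup\{R_i:i=1,\dots,k+1\}\big)$. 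Exactly as in Lemma~\ref{bmo08}, $R^*$ is attained by some $R_{i_0}$ whose $i_0$-th object is a genuine ball $B_{x_{i_0},R_{i_0}}$; this already provides $x_l:=x_{i_0}$, $R_l^*:=R^*$ and $\int_{B_{x_l,R_l^*}}\rho_l\ge\frac{c_Nm_l}{k+1}$, so only estimate~\eqref{bmo13} remains.

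For that I would build the Lipschitz cut-offs $\varphi_i$ attached to the (possibly filled) annuli exactly as in~\eqref{bmo09}--\eqref{bmo10}: they have pairwise disjoint supports, $|\nabla\varphi_i|\le 2/R^*$, and $\varphi_i\equiv1$ on the base annulus/ball. Then introduce one extra cut-off $\chi$ with $\chi\equiv1$ on $\{\operatorname{dist}(\cdot,\{\rho_l>0\})\le R\}$, $\chi\equiv0$ off $\{\operatorname{dist}(\cdot,\{\rho_l>0\})\le 2R\}$ and $|\nabla\chi|\le1/R$, and set $\psi_i:=\chi\varphi_i$. The $\psi_i$ still have pairwise disjoint supports, $|\nabla\psi_i|\le\frac{2}{R^*}+\frac1R$, and $\operatorname{supp}\psi_i\subseteq\{\operatorname{dist}(\cdot,\{\rho_l>0\})\le 2R\}$; by the hypothesis $\operatorname{dist}(\{\rho_h>0\},\{\rho_l>0\})\ge3R$ for $h\ne l$, $h\ge1$, every such $\rho_h$ vanishes on $\operatorname{supp}\psi_i$, hence $\rho=\rho_0+\rho_l$ there and $\int_{\R^N}|\nabla\psi_i|^2\rho\,dx\le\big(\frac{2}{R^*}+\frac1R\big)^2(m_0+m_l)$. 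On the other hand $\psi_i\equiv1$ on the intersection of the base annulus/ball with $\{\rho_l>0\}$, so $\int_{\R^N}\psi_i^2\rho\,dx\ge\int_{A_{x_i,r_i,R_i}}\rho_l\,dx\ge\frac{c_Nm_l}{k+1}>0$. Thus $\operatorname{span}\{\psi_1,\dots,\psi_{k+1}\}$ is an admissible $(k+1)$-dimensional test space for $\mu_k(\rho)$, and, using disjointness of supports,
\[
\mu_k(\rho)\le\max_{1\le i\le k+1}\frac{\int_{\R^N}|\nabla\psi_i|^2\rho\,dx}{\int_{\R^N}\psi_i^2\rho\,dx}\le\frac{(k+1)(m_0+m_l)}{c_Nm_l}\Big(\frac{2}{R^*}+\frac1R\Big)^2;
\]
taking square roots, rearranging and using $1/R^*>0$ gives~\eqref{bmo13}.

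The main obstacle I anticipate is not conceptual but the bookkeeping: checking that the filling step keeps each base object carrying $\rho_l$-mass $\ge c_Nm_l/(k+1)$ and keeps $R^*$ attained at a ball (as in Lemma~\ref{bmo08}); and, more importantly, verifying that multiplying by $\chi$ neither destroys the lower bound $\int_{\R^N}\psi_i^2\rho\,dx\ge c_Nm_l/(k+1)$ that makes $\psi_i$ an admissible nonzero test function nor lets a third component $\rho_h$ enter the gradient integral. This is exactly why $\chi$ must be $\equiv1$ on a whole $R$-neighbourhood of $\{\rho_l>0\}$ (so it equals $1$ wherever $\varphi_i=1$ and $\rho_l>0$), while the $3R$ separation is precisely what excludes the remaining components from $\operatorname{supp}\psi_i$.
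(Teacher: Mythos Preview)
Your proof is correct and follows essentially the same approach as the paper: apply Lemma~\ref{bmo07} to the single piece $\rho_l$, fill the annuli and define $R_l^*$ as in Lemma~\ref{bmo08}, then localize the test functions to an $R$-tubular neighbourhood of $\{\rho_l>0\}$ so that only $\rho_l$ and $\rho_0$ contribute. The sole cosmetic difference is that the paper takes $\min\big(\varphi_i,\,1-\tfrac{d(x,\{\rho_l>0\})\wedge R}{R}\big)$ (a cutoff supported in the $R$-neighbourhood) whereas you take the product $\chi\varphi_i$ with a cutoff supported in the $2R$-neighbourhood; both give the gradient bound $\tfrac{2}{R_l^*}+\tfrac{1}{R}$ and both exploit the $3R$ separation to exclude the other $\rho_h$.
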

 This lemma gives a control on the concentration of masses in a dichotomy situation. If $\mu_k(\rho)$ is not small, then on every large region where there is some positive mass of $\rho$, there should also be some concentration of this mass on a ball with controlled radius, the control being in terms of $\mu_k(\rho)$.
\begin{proof}
We rely  again on Lemma \ref{bmo07}  which is applied separately   for each $\rho_l$, $1\le l\le j$ and for $k+1$. As previously, we fill the annuli if they do not contain any other and define $R_l^*$ in a similar way. The test functions $\vphi$ are build as in \eqref{bmo09}-\eqref{bmo10} with the following new constraint: for $\rho_l$ we take the minimum between each $\vphi_i$ and $ 1- \frac{d(x, \{\rho_l >0\})\wedge R}{R}$,

In this way, the supports of the test functions for $l\not=i$  do not intersect and their gradient is still controlled by $\frac{2}{R_l^*} + \frac 1R$. Then
$$\mu_k(\rho) \le \frac{\big( \frac{2}{R_l^*} + \frac 1R \big)^2(m_l+m_0)}{\frac{c_Nm_l}{k+1}},$$
which leads to inequality \eqref{bmo13}.
\end{proof}
\begin{proof} (of Theorem \ref{bmo06.1}, continuation)
Consider  $(\ov \rho_n)_n$, a maximising sequence for problem \eqref{bmo05},
$$\ov \rho_n=  \rho_1\sqcup \rho_2\sqcup \dots \sqcup \rho_k.$$
In view of the definition of the eigenvalue of $\ov \rho_n$, we can identify the densities $\rho_n^j$ such that
$\int\rho_n^j dx \ra 0$. If this is the case, we just drop them out without decreasing the $k$-th eigenvalue. The new sequence is  also maximizing (possibly after rescalings   in order to satisfy the mass constraint) but with less components.
 For this new maximizing sequence, up to extracting a subsequence and relable the densities composing $\ov \rho_n$, we get that for $1, \dots, j$, with $j \le k$, that
$$\int \rho_n^j dx \ra m_j >0$$
and $(\rho_n^j )_n$ is maximizing sequence for problem \eqref{bmo05}, associated to some eigenvalue index $k_j$ and mass $m_j$.

Relabeling again the indices, it is enough to consider only the case in which a sequence of densities $(\rho_n)_n$ complemented by $0$, i.e. $\ov \rho_n=\rho_n \sqcup  0 \sqcup \dots \sqcup 0$ is maximizing for problem \eqref{bmo05} (possibly with a different $k$ and $m$), hence
$$\mu_k (\rho_n) \ra \sup  \{\mu_k(\rho) : \rho \in\L^1_m(\R^N, [0,1])\} >0.$$

Then $\int_{\R^N} \rho_n dx =m$. We shall use the concentration compactness principle of P.-L. Lions \cite{Li85} to describe the behaviour of the sequence $(\rho_n)_n$. There are three possibilities.
\begin{enumerate}
\item [1.] {\it Compactness.} There exists a subseqence $(\rho_{n_j})_j$ and a sequence of vectors $y_{n_j} \in \R^N$ such that
$$\rho_{n_j} \rau \rho, \mbox{ weakly-* in } L  ^\infty (\R^N)$$
and $\int_{\R^N} \rho dx =m$. Since
$$\mu_k(\rho) \ge \limsup_{j \ra +\infty}  \mu_k(\rho_{n_j}),$$
we conclude with the optimality of $\rho$ from Lemma \ref{bmo15},  since $\rho$ satisfies the mass constraint.
\item [2] {\it Vanishing.} For every $R>0$, we have that
$$\sup_{y \in \R^N} \int_{B(y, R)} \rho_n dx \ra 0, \mbox{ when } \nif.$$
This situation can not occur for a maximizing sequence, as a consequence of the geometric control of the spectrum, Lemma \ref{bmo08}. Indeed, we know that there exists $R^*$ such that
$$\int_{B_{y_n, R^*}} \rho_n \ge \frac{c_N m}{k+1}$$
 uniform in $n$, as soon as $\mu_k(\rho_n) \ge M >0$, which is expected from a maximizing sequence.
\item [3.] {\it Dichotomy.} As vanishing does not occur, we know that there exists a concentration of mass somewhere. Assume that for some subsequence and some translations if necessary we have some concentration of mass
$$m>m_1= \lim_{R\ra +\infty} \lim_\nif  \sup_y \int_{B_{y, R}} \rho_n dx>0.$$
Then for a subsequence, still denoted with the same index, we find a density $\rho_1$ such that
$$\rho_{n} \rau \rho^1, \mbox{ weakly-* in } L ^\infty (\R^N), \int_{\R^N} \rho^1 dx = m_1.$$
We define $R_n$ such that $\int_{B_{R_n} }\rho_n dx \ge m_1-\frac 1n$ and define
$$\rho_n^2= \rho_n\lfloor _{B_{3R_n^c}} \mbox{ and } \rho_n^0 = \rho_n-\rho_n^1-\rho_n^2.$$
Then, $R_n\ra +\infty$,
$$\int_{\R^N}\rho_n^2 dx \ra m-m_1>0  \mbox{ and }  \int_{\R^N}\rho_n^0 dx \ra 0.$$
In view of the enhanced geometric control of the spectrum, Lemma \ref{bmo12}, the sequence of densities $(\rho_n^2)_n$ has also a concentration of mass on balls of uniform radius. Let us denote the maximal mass concentration $m_2>0$.

If $m_2=m-m_1$, this means that the sequence $(\rho_n^2)_n$ satisfies the compactness assumption, so we get a limit $\rho^2$. Then $\ov \rho=\rho_1\cup  \rho_2$ is optimal using the upper semicontinuity result from Lemma \ref{bmo17}   since it satisfies the mass constraint.

If $m_2<m-m_1$, we continue the process  and find another concentration of mass, and so on. This procedure stops after at most $k$ steps, since a density with $k+1$ disjoint concentrations of mass has the $k$-th eigenvalue equal to $0$, so it is not maximizing.

\end{enumerate}

\end{proof}
\begin{remark} {\rm
Let
$F: \R^k_+ \ra \R$
be upper semicontinuous and non decreasing in each variable. Then the following  problem
$$\max \{ F(\mu_1(\ov \rho), \dots, \mu_k(\ov \rho)) : \ov \rho = \rho_1\sqcup \rho_2\sqcup \dots \sqcup \rho_k, \rho_j : \R^N \ra [0,1], \sum _{j=1}^k \int_{\R^N}\rho_j dx =m\}$$
has a solution. A typical example is
$$F(\mu_1(\ov \rho), \dots, \mu_k(\ov \rho))=\sum_{i=1}^k \mu_k(\ov \rho).$$
The proof does not require any further argument with respect to Theorem \ref{bmo06.1}.
}

\end{remark}

\section{The one dimensional case. Proof of Theorem \ref{bmo20.1}}\label{bmo:s3}
This section is devoted to the proof of Theorem \ref{bmo20.1}. The most of results of this section are one dimensional. However, some technical points hold true in any dimension of the space. If this is this case, we shall state the results in the most general framework.

The proof of Theorem \ref{bmo20.1} has two distinctive parts. In the first part, we shall prove the inequality
$$\forall k \in \N, \; \mu_k(\rho) \le \frac{\pi^2k^2}{\Big (\int_\R \rho dx \Big)^2 }$$
for non degenerate densities: $\rho:[0,a]\ra [\delta, 1]$ with $\delta >0$.
In the second part, we shall consider general densities and we shall use an approximation argument. The approximation argument is itself quite technical and requires to regularize the the density in different manners on the numerator and denominator. This is to avoid the presence of spurious modes in the approximation procedure.

We shall use the following classical result on the nodal points of a linear combination of eigenfunctions of a well posed Sturm-Liouville problem, for which we refer to the original paper of Sturm \cite{St36} and to \cite[Theorem 1.4 and Theorem 3.2]{BeHe20}.
\begin{lemma}\label{bmo22}(Extended Courant nodal domain property)
Assume $\rho: [0,a] \ra [\delta, 1]$. If $u_k$ is a nonzero eigenfunction associated to $\mu_k(\rho)$, then $u_k$ has precisely $k$ zeros. Moreover, any linear combination
$$\sum_{i=0}^k a_iu_i$$
of the eigenfunctions $u_0, \dots, u_k$ has at most $k$ zeros, multiplicities being counted.
\end{lemma}
\begin{proof}(of Theorem \ref{bmo20.1}) In a first step, we  assume $\rho: [0,a] \ra [\delta, 1]$ so that the problem is elliptic and the eigenvalue problem is well posed on the interval $(0,a)$. In a second step we shall use a double approximation procedure to deal with degenerate densities, and reduce the argument to the elliptic case.

\smallskip
\noindent{\bf Step 1. Assume  $\rho: [0,a] \ra [\delta, 1]$.}
The proof is based on the construction of a test function $g$ which is $\rho$-orthogonal to the eigenfunctions $1, u_1, \dots, u_{k-1}$ and satisfies
\begin{equation}\label{bmo23}
\frac{\int_0^a\rho (g')^2 dx}{\int_0^a\rho g^2 dx} \le \frac{\pi^2k^2}{\Big (\int_0^a \rho dx \Big)^2 }.
\end{equation}
This will readily give the conclusion. The difficulty to build $g$ is that $u_1, \dots, u_{k-1}$ are not known. This technical point popped up in \cite{BuHe19}, in the case $k=2$ (with only one unknown function) and was dealt with a topological degree argument (see as well \cite{FrLa20}). Here, we have $k-1$ unknown functions but we have the advantage of the dimension being equal to   $1$. We shall also use a topological degree argument, in the spirit of \cite{BuHe19}, however  more sophisticated here, since  orthogonality is searched on $ k$ functions simultaneously.

Assume for now on, without restricting generality, that $\int \rho dx =1$. Let $P= (a_1, \dots, a_k) \in \R^k$ and assume that
$$b_1 \le b_2 \le \dots \le b_k$$
is an ordered relabeling of $a_1, \dots, a_k$. We associate to $P$ the function $g_P$, built as follows. Let first introduce the function (see Figure \ref{bmo24})

$$h(x)= -1_{(-\infty,  -\frac{1}{2k} )} (x)+\sin (k\pi x)1_{[-\frac{1}{2k}, \frac{1}{2k}]} (x)
+1_{( \frac{1}{2k}, +\infty)} (x),$$
\begin{figure}
  \centering
  \includegraphics[width=0.25\textwidth]{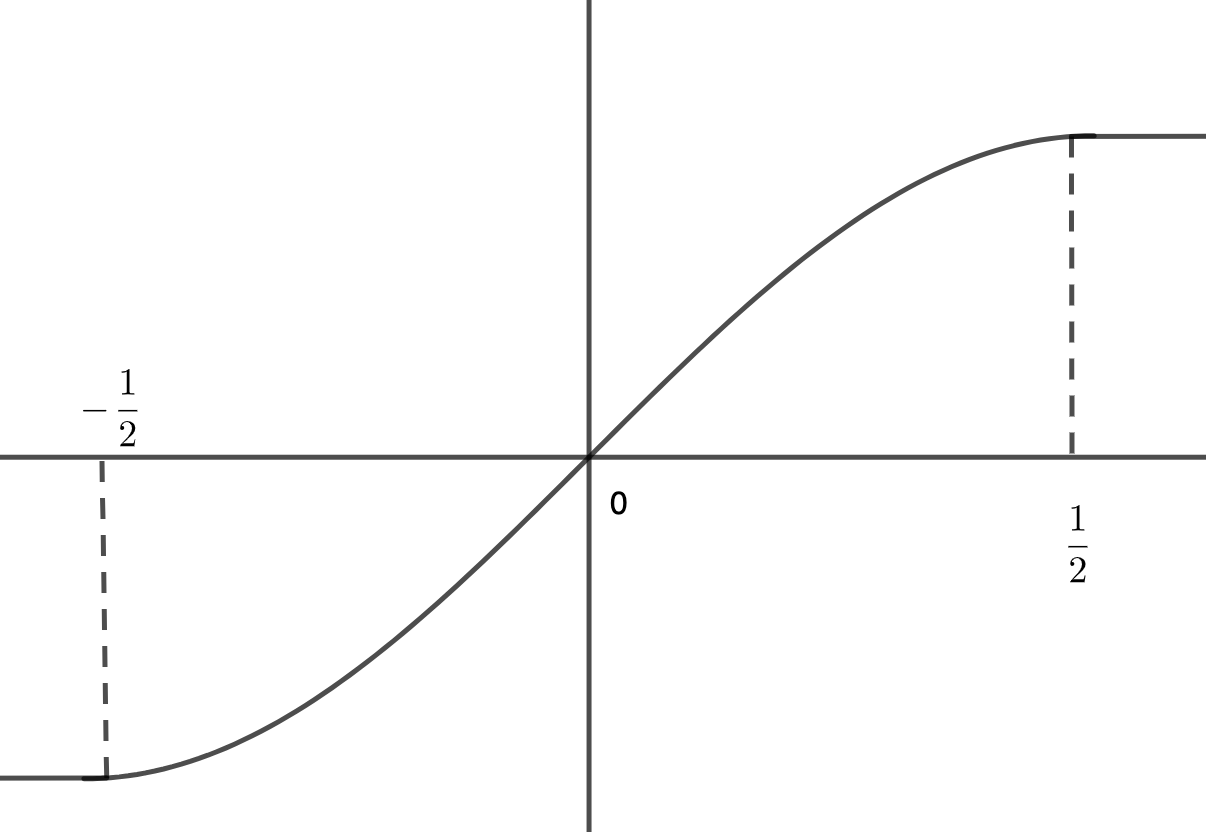}\hskip 2cm
   \includegraphics[width=0.3\textwidth]{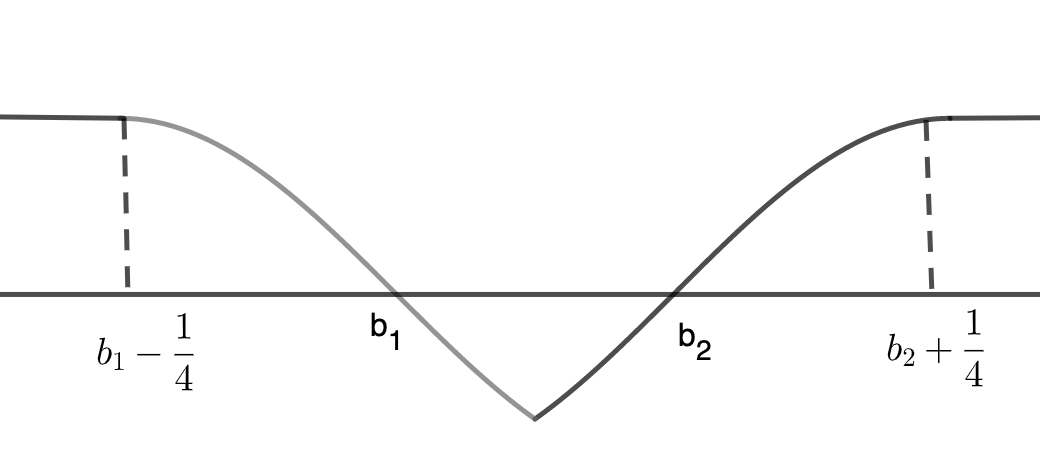}
  \caption{The function $h$ for $k=1$ and a function $g_P$ for $k=2$.}
  \label{bmo24}
\end{figure}

and for every $j =1, \dots, k$ define
$$g_j(x)= h(x-b_j).$$
On any interval $(b_j, b_{j+1})$ we define
$$g_P= (-1)^j\min\{|g_i|, |g_{i+1}|\},$$
while on $(-\infty, b_1)$ $g_P=-g_1$ and on $(b_k, +\infty)$ $g_P= (-1)^kg_{k}$.

\medskip

\noindent{\bf Validity of the functions $g_P$.}
Assume the we find some point $P$ such that
\begin{equation}\label{bmo26}
\forall 0\le i \le k-1, \int_0^a g_Pu_i \rho dx =0.
\end{equation}
Then,
$$\mu_k(\rho) \le \frac{\int_0^a\rho (g_P')^2 dx}{\int_0^a\rho g_P^2 dx}.$$
Then, we use the mass transplantation argument on each interval where $g_P$ equals a function $\pm g_i$ as the maximal interval where the derivative of $g_j$ is not vanishing is of length $\frac 1k$. In this way, we get direct comparison of each ratio
$$\frac{\int_0^a\rho (g_i')^2 dx}{\int_0 ^a\rho g_i^2 dx} \le \frac{\int_{-\frac{1}{2k} }^\frac{1}{2k} \pi^2 k^2 \cos^2(\pi k x) dx}{\int_{-\frac{1}{2k} }^\frac{1}{2k}   \sin^2(\pi k x) dx} = \pi^2k^2,$$
and by standard algebraic argument
$$\frac{\int_0^a\rho (g_P')^2 dx}{\int_0^a\rho g_P^2 dx}\le \pi^2k^2.$$

\medskip

\noindent{\bf Existence of an admissible function $g_P$.}
This is the most technical part. In fact we shall prove a more precise statement in order to be able to use a topological degree argument.
\begin{lemma}\label{bmo25}
There exists a point  $P=(a_1, \dots, a_k)\in \R^k$ satisfying \eqref{bmo26}. Moreover,  this point is unique up to a permutation of the indices and  $-\frac{1}{2k}<a_1 \le\dots\le a_k<a+\frac{1}{2k}$.
\end{lemma}
\begin{proof}
We will use a cyclic induction argument on $k$,
$$\dots \mbox{ existence for } k \Lra \mbox{ uniqueness for } k \Lra   \mbox{ existence for } k+1 \dots $$
\smallskip
\noindent {\bf Start $(k=1)$.} In this case, both existence and uniqueness are trivial. The existence follows as a consequence of the intermediate value property for a continuous function, while uniquness is a consequence of the connectedness of the support of $\rho$. As well, $-\frac 12 < a_1< a+\frac 12$, otherwise $\int_0^a \rho g_Pdx =1\not=0$.

\smallskip
\noindent {\bf Existence for $k$.} Assume the assertion is true up to $k-1$. We shall prove the assertion $k$, starting with the existence part. We know that there exists
$$Q=(b_1, \dots, b_{k-1}) \mbox{ such that. } \forall i=0, \dots , k-2,\;\; \int_\R g_Q u_i \rho dx =0.$$

We fix $M>0$ large enough, say $M \ge a+2k+2$, and define
$$F:[-M, M]^k \ra \R^k,$$
$$F(a_1, \dots, a_k)= (\int _0^a \rho g_P dx,  \int _0^a u_1 \rho g_P dx, \dots, \int _0^a u_{k-1} \rho g_P dx).$$
Above $P=(a_1, \dots, a_k)$. Our purpose is to prove that there exists $P \in [-M, M]^k$ such that $F(P)=0$.

We introduce the segments $S_i=(a+2i, a+2i+\frac 1k)$ and the deformation of $F^t$ of $F$, by
$\forall t \in [0,1]$, $\forall 0\le i\le k-2$, $F_i=F^t_i$ and
$$F^t_{k-1} (P)= t\int \rho u_{k-1} g_P dx + \alpha (1-t) \int _{S_{k}} g_P dx,$$
where $\alpha \in \{-1,1\}$ will be chosen later.

Clearly, this deformation is continuous. Let us prove that no zero crosses $\partial [-M, M]^k$.
Assume for some $t$ that $F(a_1, \dots, a_k)=0$ and that $a_k=M$ or $a_k=-M$. The choice of the index $k$ is possible by a permutation of the indices. Let us assume $a_k=M$. Then, in the definition of $g_P$ the point $a_k$ has no influence on the support of $\rho$. In this way, denoting $P'=(a_1, \dots, a_{k-1})$ we get
$$\forall 0\le i \le k-2, \int \rho u_{i} g_P dx=0.$$
Using the induction hypothesis, the point $P'$ is necessarily the unique point given by the induction hypothesis for $k-1$. Now, we look to the sign of $\int \rho u_{k-1} g_P$. If nonnegative we chose $\alpha=-1$, if nonpositive, we chose $\alpha=1$, so that
$$t\int \rho u_{k-1} g_P dx + \alpha (1-t) \int _{S_{k}} g_P dx\not =0.$$
This choice implies that $F^t$ can not vanish on $[0,1]$. In case $a_k=-M$, the argument is similar.

Once arrived at $t=1$, we relabel $F^1=F$ and continue with a second deformation, keeping constant the last component $\alpha \int _{S_{k}} g_P dx$, keeping unchanged the first $k-2$ components and deforming for $t \in [0,1]$
$$F^t_{k-2} (P)= t\int \rho u_{k-2} g_P dx + \alpha (1-t) \int _{S_{k-1}} g_P dx.$$
This is again continuous and can not have a zero crossing $\partial [-M, M]^k$. The argument is exactly the same. The only new observation comes from the fact that as $ \int _{S_{k}} g_P dx=0$, necessarily one point, say $a_{k}$ has to be the middle of the segement $S_k$. More than one point can not come in touch with $S_{k}$, otherwise strictly less than $k-2$ points would be sufficient to cancel the first $k-2$ components, which is not possible by the induction property.

We continue this procedure and arrive to the final function
$$F_{fin} (P)= (\alpha_1\int_{S_1} g_P dx, \dots, \alpha_k\int_{S_k} g_P dx).$$
This function has as (only) zero any permutation of the centers of the segments $S_i$ and the sign of the Jacobian is always the same, hence is topological degree is not $0$ and we conclude with the existence part of the proof.

\smallskip
\noindent {\bf Uniqueness for $k$.} Assume $P_1\not= P_2$, up to permutations and
$$\forall j=1,2, \forall \; 0\le i \le k-1, \;\;\int \rho g_{P_j} u_i dx =0.$$
We get that $\forall 0 \le i \le k-1, \int \rho (g_{P_1} -g_{P_2})u_i dx =0$. in view of the structures of $g_{P_1}$ and $g_{P_2}$, the function $g_{P_1} -g_{P_2}$ has at most $k-1$ zeroes where there is sign changing.  Assume thay are $y_1, \dots, y_j$, with $j\le k-1$. We find $\alpha_0, \dots, \alpha_j$, not all of them vanishing, such that
$$U=\sum_{i=0}^j \alpha_i u_i$$
satisifies $U(y_i)=0$, $\forall i=1, \dots, j$. This is a consequence of the fact that a linear system with $k$ equations, $k+1$ unknowns and $0$ right hand side, has a space of solutions of co-dimension at least equal to $1$.

Using the extended Courant property,   the number of zeros of $U$ is at most $j$, in our case being exactly equal to $j$. Hence $U$ has to change sign between two zeros, otherwise we would get more than $j$ zeros, including multiplicity. Finally, this implies that
$(g_{P_1}-g_{P_2}) U$
has constant sign. This is a contradiction with the orthogonality property
$$\int \rho (g_{P_1}-g_{P_2}) U dx=0.$$

\smallskip
\noindent {\bf The inequality $-\frac{1}{2k}<a_1 \le\dots\le a_k<a+\frac{1}{2k}$.} In fact, this inequality implies that all $k$-points are necessarily involved in the construction of the function $g$. Assume this inequality is not true. Then any point out of the interval $(-\frac{1}{2k}, a+\frac{1}{2k})$ does not play any role in the geometry of the function $g_P$ on $[0, a]$. Assume
that for some $j <k$, $Q=(a_1, \dots, a_j)$ are the only points in this interval. Then
$$\int g_Q\rho u_i dx =0, i=0, \dots, k-1.$$
The function $g_Q$ changes sign at most $j+1$ times and has no more than $j$ zeros where it changes sign. There exists a linear combination of $u_0, \dots, u_j$ with the same zeros, so that a similar argument as the previous one leads to a contradiction.

\smallskip

\noindent{\bf Step 2.   General densities.} In order to handle the approximation procedure for general densities, we need to regularize differently the numerator and the denominator.  Let $\rho_1, \rho_2 : \R^N\ra [0, +\infty)$, $\rho_1,\rho_2 \in L^1(\R^N) \cap L^\infty (\R^N)$, $\rho_2 \not\equiv0$.
For every integer $k \ge 0$, the number
\begin{equation}\label{bmo03.1}
\mu _k(\rho_1, \rho_2) := \inf_{S\in{\mathcal S}_{k+1}} \max_{u \in S\sm \{0\}} \frac{\int_{\R^N} \rho_1|\nabla u|^2 dx}{\int_{\R^N }\rho_2 u^2 dx},
\end{equation}
is called the $k$-th eigenvalue of the problem
$$-\div(\rho_1\nabla u)= \mu_k (\rho_1, \rho_2) \rho_2 u  $$
with Neumann boundary conditions. Above, ${\mathcal S}_{k+1}$ is the family of all subspaces of dimension $k+1$ in
\begin{equation}\label{bmo03.2}
\{u\cdot 1_{\{\rho_2 (x)>0\}}: u \in C^\infty_c (\R^N)\}.
\end{equation}
If $\rho_1=\rho_2:=\rho$, we have $\mu_k(\rho_1, \rho_2)=\mu_k(\rho)$.
By convention, if $\rho_2=0$, we set $\mu_k(\rho_1, \rho_2)=+\infty$.

The following upper semicontinuity result holds.

\begin{lemma}\label{bmo15}
Assume $\rho_1, \rho_2,\rho_{1}^n \rho_{2}^n  \in L^1(\R^N, [0,1]) $ satisfy $\rho_i^n \rau \rho_i$ weak-* in $L^\infty (\R^N)$, for $i=1,2$. Then
$$\forall k \ge 1, \;\mu_k(\rho_1, \rho_2) \ge \limsup_\nif \mu_k(\rho_1^n, \rho_2^n).$$
\end{lemma}
\begin{proof}
The proof of the result is absolutely similar to   Lemma \ref{bmo15.1}.
\end{proof}

\begin{lemma}\label{bmo15.2}
Let $\delta >0$. Assume $\rho_1, \rho_2 :(0,a) \ra [\delta, 1]$ and denote $m = \min \{ \ds \int_0^a \rho_1 dx, \int_0^a \rho_2 dx\}$. Then,
$$ \mu _k(\rho_1, \rho_2)\le \frac{\pi^2k^2}{ m^2 }.$$
\end{lemma}
\begin{proof}
Assume first that $\int_0^a \rho_2 dx\le \int_0^a \rho_1dx$. The proof    of  Step 1 is valid with  $m =\int_0^a\rho_2dx$. The key point is that the inequality given by mass transplantation   still occurs, since on the regions where $g_P'$ vanishes, the value of $\rho_1$ is not important, while on the regions where  $g_P'$  does not vanish we replace anyway $\rho_1$ by $1$.

In particular, this implies that
\begin{equation}\label{bmo15.7}
 \Big(\int_0^a \rho_2dx \Big )^2\mu_k(1, \rho_2) \le \pi^2k^2.
 \end{equation}
If $\int \rho_2 dx\ge \int \rho_1dx$, for some $0<\alpha <1$ we write $\int \alpha \rho_2 dx= \int \rho_1dx$ and use monotonicity of the Rayleigh quotient together with the previous argument to get
$$\mu_k(\rho_1,   \rho_2)\le \mu_k(\rho_1, \alpha \rho_2) \le \mu_k(1, \alpha \rho_2) \le \frac{\pi^2k^2}{\Big(\ds \int_0^a \alpha \rho_2dx \Big )^2}=\frac{\pi^2k^2}{\Big(\ds \int_0^a   \rho_1dx \Big )^2}.$$
\end{proof}

\noindent{\bf Approximation of density eigenvalues by  regular problems.}
 The following approximation results is inspired from numerical analysis, technically developed in order  to avoid spurious eigenvalues in the numerical approximation (see for instance Allaire and Jouve \cite{allaire_level-set_2005} and Section \ref{bmo:s5}).

  Let  $\rho_1, \rho_2\in L^1(\R^N) \cap L^\infty (\R^N, \R^+)$, $\rho_2 \not\equiv 0$. We denote
 $$\rho_1^\vps= \rho_1+ \vps e^{-\frac{|x|^2}{2}}, \quad \rho_2^\vps= \rho_2 1_{B_{\frac 1\vps}}+ \vps^2 e^{-\frac{|x|^2}{2}}$$
  where $B_r$ stands for the ball of radius $r$ centered at the origin.

The problem associated to $(\rho_1^\vps, \rho_2^\vps) $ is regular. This is a consequence of the compact embedding  $H^1(\R^N, \gamma_N) \sq L^2(\R^N,  \gamma_N)$ and of the ellipticity of the differential operator $ u \to -\div(\rho_1^\vps \nabla u)$ in the associated spaces.

 \begin{lemma}\label{bmo28}
 With the previous notations, we have
 $$\forall k \in \N, \; \mu_k(\rho_1^\vps , \rho_2^\vps) \ra  \mu_k(\rho_1, \rho_2).$$
 \end{lemma}
 \begin{proof}
  The proof is done in two steps: upper and lower semicontinuity. For the upper semicontinuity, it is enough to observe that
    $\rho_i^\vps \rau \rho_i$, weakly-* in $L^\infty(\R^N)$ hence one can relay Lemma \ref{bmo15} to get upper semicontinuity.
Let us prove
\begin{equation}\label{bmo29}
\mu_k (\rho_1, \rho_2)\leq \liminf\limits_{\vps\rightarrow0} \mu_k^\varepsilon (\rho_1^\vps, \rho_2^\vps).
\end{equation}

  To avoid heavy notations, we shall denote $\mu_k^\vps:= \mu_k^\varepsilon (\rho_1^\vps, \rho_2^\vps)$.
The $\vps$-problem is well posed, so that we can consider eigenfunctions $u_0^\vps,...,u_k^\vps \in H^1(\R^N, \gamma_N)$  associated to $\mu_0^\varepsilon,...\mu_k^\varepsilon$ respectively, such that
\begin{equation}\label{bmo30}
  \int_{\R^N}  \rho_2^\vps (u_i^\varepsilon)^2   = 1
\end{equation}
\begin{equation}\label{bmo31.e}
 \int_{\R^N} \rho_1^\vps |\nabla u_i^\varepsilon|^2  = \mu_i^{\varepsilon} \leq M
\end{equation}
\begin{equation}\label{bmo32}
 \forall i \not= j,\;\; \int  \rho_2^\vps u_i^\varepsilon u_j^\varepsilon  = 0
\end{equation}
where $M$ is a common bound for all $\varepsilon$ and all $i\in \{0,...,k\}$ (such a bound exists, otherwise the $liminf$ in \eqref{bmo29} equals $+\infty$ and so there is nothing to prove).
From the first equality we have that $\varepsilon u_i^\varepsilon$ is bounded in $L^2(\R^N, \gamma_N)$. Then up to a subsequence, there exist a $U_i \in L^2(\R^N, \gamma_N)$ such that
\[\varepsilon u_i^\varepsilon \wlim{L^2(\R^N, \gamma_N)} U_i, \;\mbox{weakly}. \]
In particular, if we denote by $\bar{v}:=  \int v d \gamma_N  $, the weighted mean of $v$, we have that
\[\varepsilon \overline{ u_i^\varepsilon} \to \overline{U_i}.\]
This is a consequence of the   fact that $1 \in L^1(\R^N, \gamma_N)$.
From \eqref{bmo31.e} we get that $(\sqrt{\varepsilon}|\nabla u_i^\vps|)$ is bounded in $L^2(\R^N, \gamma_N)$. This implies that $\varepsilon |\nabla u_i^\varepsilon| \xrightarrow{\L^2(\R^N, \gamma_N)} 0$. Thanks to the the compactness of $H^1(\R^N, \gamma_N) $ in $L^2(\R^N, \gamma_N)$, we have that
\[ || \varepsilon u_i^\varepsilon - \varepsilon \overline{u_i^\varepsilon} ||_{L^2(\R^N, \gamma_N)} \to 0 \]
which implies that
\[ \varepsilon u_i^\varepsilon \xrightarrow{L^2(\R^N, \gamma_N)} \overline{U_i}. \]
In the same time,   $u_i^\varepsilon$ is bounded in $\L^2(\R^N, \rho 1_{B_n})$, for some $n \in \N$ such that $|B_n\cap \{\rho >0\}| >0$  so
\[ \varepsilon u_i^\varepsilon \xrightarrow{\L^2(\R^N, \rho 1_{B_n}) } 0 \]
then $\ov U_i = 0$ from the a.e. point wise convergence. In other terms,
\[\varepsilon^2 \int_{\R^N}  (u_i^\vps)^2 e^{-\frac{|x|^2}{2}} dx \to 0\]
then by Cauchy-Schwarz
\[\varepsilon^2 \int_{\R^N}  u_i^\varepsilon u_j^\varepsilon e^{-\frac{|x|^2}{2}}  dx \to 0\]
so that
\[\int_{\R^N}  \rho_2 1_{B_\frac 1\vps}(u_i^\varepsilon)^2 dx \to 1 \]
and
\[\int_{\R^N}  \rho_2 1_{B_\frac 1\vps}u_i^\vps u_j^\vps  dx\to 0\]

Finally,
\[\mu_k^\varepsilon = \frac{\ds \int_{\R^N}  \rho_1 |\nabla u_k^\varepsilon|^2 + \varepsilon e^{-\frac{|x|^2}{2}}  |\nabla u_k^\varepsilon|^2}{\ds \int_{\R^N} \rho_21_{B_\frac 1\vps} (u_k^\varepsilon)^2 + \varepsilon^2  e^{-\frac{|x|^2}{2}} (u_k^\varepsilon)^2}\]
\[
= \max\limits_{v^\varepsilon \in Span(u_0^\varepsilon,...,u_k^\varepsilon)} \frac{\ds \int_{\R^N} \rho_1 |\nabla v^\varepsilon|^2 + \varepsilon e^{-\frac{|x|^2}{2}}  |\nabla v^\varepsilon|^2}{\ds \int_{\R^N} \rho_2 1_{B_\frac 1\vps}(v^\varepsilon)^2 + \varepsilon^2e^{-\frac{|x|^2}{2}}  (v^\varepsilon)^2}
\geq \max\limits_{v^\vps \in Span(u_0^\varepsilon,...,u_k^\varepsilon)} \frac{\ds \int_{\R^N}  \rho_1 |\nabla v^\varepsilon|^2 }{\ds \int_{\R^N}  \rho_2 (v^\varepsilon)^2 + \varepsilon^2 e^{-\frac{|x|^2}{2}} (v^\varepsilon)^2}.\]
Passing to the $\liminf$ and recalling that
\[\ds \int_{\R^N}  \varepsilon^2 e^{-\frac{|x|^2}{2}} (v^\varepsilon)^2 \to 0 \]
for every $v^\varepsilon= \sum _{j=0}^k a_j^\vps u_j^\vps$ with $\sum _{j=0}^k (a_j^\vps )^2 =1$,
we get
\[\liminf \mu_k^\varepsilon
\geq \liminf \max\limits_{v^\varepsilon \in Span(u_0^\varepsilon,...,u_k^\varepsilon)} \frac{\ds \int_{\R^N} \rho_1 |\nabla v^\varepsilon|^2 }{\ds \int_{\R^N}  \rho_2 (v^\vps)^2} \]
We conclude by remarking that $Span(u_0^\varepsilon,...,u_k^\varepsilon)1_{\{\rho_2>0\}}$ is of dimension $k+1$ for $\varepsilon$ small enough,
hence
\[\max\limits_{v^\varepsilon \in Span(u_0^\varepsilon,...,u_k^\varepsilon)} \frac{\ds \int_{\R^N}  \rho_1 |\nabla v^\varepsilon|^2 }{\ds \int_{\R^N}  \rho_2 (v^\varepsilon)^2}
\geq \mu_k(\rho_1, \rho_2).\]
 \end{proof}

\smallskip
\noindent {\it Proof of Theorem \ref{bmo20.1}, continuation.}
 Let now $\rho \in \L^1(\R, [0,1])$. For every $\vps >0$, we define
$$\mu_k^\vps(\rho) = \inf_{U \in S_{k+1}} \max _{u \in U} \frac{\ds \int_{\R} (\rho + \vps e^{-x^2})(u')^2 dx }{\ds \int_{\R} (\rho 1_{[-\frac 1\vps, \frac 1\vps]}+ \vps^2 e^{-x^2})u^2 dx }.$$
This problem is well posted in view of the compact embedding of the Sobolev space associated to the Gaussian measure in $L^2$. Moreover $\mu_k^\vps (\rho) \ra \mu_k(\rho)$ when $\vps \ra 0$.

Let is introduce the second approximation. For every $n \in \N$,
$$\mu_k^{\vps, n} (\rho) = \inf_{U \in S_{k+1}} \max _{u \in U} \frac{\ds \int_{-n} ^n(\rho + \vps e^{-x^2})(u')^2 dx }{\ds \int_{-n} ^n(\rho 1_{[-\frac 1\vps, \frac 1\vps]}+ \vps^2 e^{-x^2})u^2 dx }.$$
Then,   for every fixed $\vps >0$, we have
$$ \forall k \ge 1, \; \mu_k^{\vps, n} (\rho)  \ra \mu_k^\vps(\rho), \mbox{ when } \nif.$$
Indeed, the upper semicontinuity is done as before. The lower semicontinuity is a consequence of the "collective compactness" property: if $\vphi_n \in H^1(-n,n)$ is such that $\int_{-n}^n (\vphi_n')^2 e^{-x^2} dx \le M$ and
$\int_{-n}^n \vphi_n^2 e^{-x^2} dx=1$, then there exists $\vphi \in H^1(\R, \gamma_1)$ such that
$\vphi_n \rau \vphi$ in $H^1_{loc} (\R)$ and
$$\vphi_n 1_{(-n,n)} \ra \vphi, \mbox{ strong in } L^2(\R, \gamma_1).$$
This comes from  the compact embedding $H^1(\R, \gamma_1) \sq L^2(\R, \gamma_1) $ and the uniform extension operator
$$H^1((-n,n), \gamma_1) \ni \vphi \ra \tilde \vphi \in H^1(\R, \gamma_1),$$
given by the reflection agains the points $n+2n\Z$.

Denoting
$$m_{\vps, n}:= \min \Big\{ \frac{\ds \int_{-n} ^n\rho + \vps e^{-x^2} dx}{1+\vps}, \frac{\ds \int_{-n} ^n\rho 1_{[-\frac 1\vps, \frac 1\vps]}+ \vps^2 e^{-x^2} dx}{1+\vps^2}\Big \},$$
we get, from Lemma \ref{bmo15.2}
$$\mu_k^{\vps, n} (\rho) \le \frac{1+\vps}{1+\vps^2} \cdot \frac{\pi^2 k^2}{m_{\vps, n}^2}.$$
Passing to the limit $\nif$ and then $\vps \ra 0$, we conclude the proof.

\end{proof}

\end{proof}

\medskip

\begin{remark}[The equality case]  Clearly, if $\rho : [0,a] \ra[\delta, 1]$ satisfies
$$\mu_k(\rho)= \frac{\pi^2 k^2}{\Big (\ds \int_0^a \rho dx \Big) ^2},$$
  this means that the function $g_P'$ has to be an eigenfunction and all the mass of $\rho$ is distributed on the $k$ segments of lenght $\frac 1k \ds \int_0^a \rho dx$, which have to be disjoint. This is a consequence of the mass transfer procedure. However, the hypothesis $\rho \ge \delta 1_{[0,a]}$ prevents this situation.
On the other hand, if $\rho$ equals the characteristic function of $k$ such segments, then equality occurs.

More interesting is the anlaysis of the equality case in Lemma \ref{bmo15.2}. Assume now  $\rho$ equals the characteristic function of $k$
$$\mu_k(1,\rho)= \frac{\pi^2 k^2}{\Big (\ds \int_0^a  \rho dx \Big) ^2}.$$
Then
$$ \frac{\pi^2 k^2}{\Big (\ds \int_0^a \rho dx \Big) ^2}= \mu_k(\rho) \le \mu_k(1,\rho) \le   \frac{\pi^2 k^2}{\Big (\ds \int_0^a  \rho dx \Big) ^2}.$$
In fact, in general, the eigenvalues $\mu_j(1, \rho)$ depend on the pairwise distance between the segments and
$$\mu_j(\rho) \le \mu_j(1,\rho).$$
The inequality is, in general, strict, except for the $k$-th eigenvalue, when the values are necessarily equal.
\end{remark}
We end this section with the following sharp inequality involving the Sturm-Liouvielle eigenvalues.
\begin{corollary}[Sharp inequalities for Sturm-Liouville eigenvalues] Let $(\alpha,\beta)\sq \R$ be an interval and $\rho_1,\rho_2:[\alpha, \beta]\ra \R$ be positive  $C^1$ functions. We consider the eigenvalue problem
\begin{equation*}
    \left\{
      \begin{aligned}
         -(\rho_1u')'  = \mu_k\rho_2 u  \text{ on } (\alpha, \beta)
        \\ u'(\alpha)=u'(\beta)  = 0
      \end{aligned}
    \right.
\end{equation*}
Then
\[\forall k \ge 0, \; \mu_k\le \frac{\|\rho_2\|_\infty}{\|\rho_1\|_\infty}\frac{\pi^2k^2}{ \min (\frac{\int_\alpha ^\beta \rho_1}{\|\rho_1\|_\infty}, \frac{\int_\alpha ^\beta \rho_2}{\|\rho_2\|_\infty})^2 }
\]

\end{corollary}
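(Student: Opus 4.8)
The plan is to deduce the inequality directly from Lemma~\ref{bmo15.2} by an affine change of variables followed by a renormalization of the two densities. First I would translate the interval so that $\alpha=0$, writing $a:=\beta-\alpha$; the Neumann Sturm--Liouville spectrum is invariant under translations of the interval. Since $\rho_1,\rho_2$ are positive and of class $C^1$ on the compact interval $[0,a]$, the boundary value problem $-(\rho_1u')'=\mu\rho_2 u$ on $(0,a)$ with $u'(0)=u'(a)=0$ is a regular Sturm--Liouville problem; testing the equation for the $k$-th eigenfunction against itself, integrating by parts with the Neumann conditions, and invoking the min--max characterization identifies its $k$-th eigenvalue with the quantity $\mu_k(\rho_1,\rho_2)$ of \eqref{bmo03.1} (the admissible family \eqref{bmo03.2} may be taken spanned by arbitrary $C^\infty_c$ functions, since $\rho_2>0$ throughout $(0,a)$). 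For $k=0$ both sides vanish, so assume $k\ge 1$.

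Next I would normalize. Set $\tilde\rho_i:=\rho_i/\|\rho_i\|_\infty$ for $i=1,2$, so that $\tilde\rho_1,\tilde\rho_2:(0,a)\to(0,1]$; by continuity and strict positivity on the compact interval, $\tilde\rho_i\ge\delta$ for some $\delta>0$. Hence Lemma~\ref{bmo15.2} applies to $(\tilde\rho_1,\tilde\rho_2)$ and gives $\mu_k(\tilde\rho_1,\tilde\rho_2)\le \pi^2k^2/\tilde m^2$, where $\tilde m=\min\big(\int_0^a\tilde\rho_1\,dx,\ \int_0^a\tilde\rho_2\,dx\big)=\min\big(\int_\alpha^\beta\rho_1\,dx/\|\rho_1\|_\infty,\ \int_\alpha^\beta\rho_2\,dx/\|\rho_2\|_\infty\big)$ is exactly the quantity appearing in the statement. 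Since dividing $\rho_1$ by $\|\rho_1\|_\infty$ divides the Rayleigh quotient in \eqref{bmo03.1} by $\|\rho_1\|_\infty$, while dividing $\rho_2$ by $\|\rho_2\|_\infty$ multiplies it by $\|\rho_2\|_\infty$, one has $\mu_k(\tilde\rho_1,\tilde\rho_2)=\frac{\|\rho_2\|_\infty}{\|\rho_1\|_\infty}\mu_k(\rho_1,\rho_2)$, whence $\mu_k=\mu_k(\rho_1,\rho_2)\le \frac{\|\rho_1\|_\infty}{\|\rho_2\|_\infty}\cdot\frac{\pi^2k^2}{\tilde m^2}$, which is the claimed bound.

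I do not anticipate a genuine obstacle: this is essentially a corollary of Lemma~\ref{bmo15.2}, the substantive work being the one-dimensional analysis already carried out. The only points deserving care are (i) making rigorous the identification of the classical Neumann Sturm--Liouville eigenvalues with the numbers $\mu_k(\rho_1,\rho_2)$ of \eqref{bmo03.1} --- routine, since the $C^1$-regularity and positivity of $\rho_1,\rho_2$ on the compact interval render the problem regular with a well-understood discrete spectrum --- and (ii) bookkeeping the multiplicative constant produced by the renormalization, so that the prefactor comes out as $\|\rho_1\|_\infty/\|\rho_2\|_\infty$.
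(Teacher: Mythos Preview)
Your approach is exactly the one the paper takes: normalize the two densities to lie in $[\delta,1]$ and invoke Lemma~\ref{bmo15.2}. Your bookkeeping in point~(ii) is in fact more careful than the paper's, and your prefactor $\|\rho_1\|_\infty/\|\rho_2\|_\infty$ is the correct one; the ratio $\|\rho_2\|_\infty/\|\rho_1\|_\infty$ printed in the statement is a typo (take $\rho_1\equiv 1$, $\rho_2\equiv c$ with $0<c<1$ to see that the printed inequality fails while yours is sharp).
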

\begin{proof}
This is a consequence of Theorem \ref{bmo20.1} applied to $\frac{ \int_\alpha ^\beta  \rho_1}{\|\rho_1\|_\infty}$, $\frac{\int_\alpha ^\beta  \rho_2}{\|\rho_2\|_\infty}$ and relies on the argument of Lemma \ref{bmo15.2}.
\end{proof}

\section{P\`olya conjecture and Kr\" oger inequalities}\label{bmo:s4}

\medskip
A direct consequence of Theorem \ref{bmo20.1} is the following.
\begin{corollary}
The P\'olya conjecture holds in one dimension of the space. Let $\rho \in L^1(\R, [0,1])$, $\rho \not \equiv 0$. Then
\begin{equation}\label{bmo21}
\forall k \in \N, \; \mu_k(\rho) \le \frac{\pi^2k^2}{\Big (\int_\R \rho dx \Big)^2 }.
\end{equation}
\end{corollary}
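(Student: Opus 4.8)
The plan is to derive the corollary directly from Theorem \ref{bmo20.1}, which asserts that $\mu_k^* = \pi^2 k^2$ in $\R$, where $\mu_k^*$ is the scale-invariant supremum defined in \eqref{bmo01.1}. First I would recall that, by definition, $\mu_k^* = \sup\{(\int_\R \rho\, dx)^{2}\mu_k(\rho) : \rho : \R \to [0,1]\}$, so that for \emph{any} fixed nonzero $\rho \in L^1(\R,[0,1])$ we automatically have $(\int_\R \rho\, dx)^{2}\mu_k(\rho) \le \mu_k^* = \pi^2 k^2$. Dividing by $(\int_\R \rho\, dx)^2$, which is finite and strictly positive under the hypotheses $\rho \not\equiv 0$ and $\rho \in L^1(\R)$, yields \eqref{bmo21} immediately.

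There is essentially no obstacle here: the corollary is a restatement of the upper-bound half of Theorem \ref{bmo20.1} in non-scale-invariant form. The only point that deserves a sentence is why the quantity $\int_\R \rho\, dx$ is a genuine positive real number — this is exactly the standing assumption $\rho \in L^1(\R,[0,1])$ with $\rho \not\equiv 0$, which also guarantees that $\mu_k(\rho)$ is well defined via \eqref{bmo03.0} (the admissible subspaces in \eqref{bmo03} being nontrivial since $|\{\rho > 0\}| > 0$). One could, if desired, make the dependence on the mass $m := \int_\R \rho\, dx$ explicit by applying Theorem \ref{bmo20.1} in its stated form (with the optimal density being a union of at most $k$ segments of total length $m$), but the scale-invariant formulation already packages this.

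Concretely, the proof I would write is: \emph{Fix $k \in \N$ and $\rho \in L^1(\R,[0,1])$ with $\rho \not\equiv 0$. Since $\rho$ is admissible in \eqref{bmo01.1}, the definition of $\mu_k^*$ gives $\big(\int_\R \rho\, dx\big)^2 \mu_k(\rho) \le \mu_k^*$. By Theorem \ref{bmo20.1}, $\mu_k^* = \pi^2 k^2$, and since $0 < \int_\R \rho\, dx < +\infty$ we may divide to obtain \eqref{bmo21}.} That is the entire argument; the substance lies entirely in Theorem \ref{bmo20.1}, whose proof (the topological degree construction of the test functions $g_P$ in Step 1, together with the double-regularization approximation of Step 2) has already been carried out.
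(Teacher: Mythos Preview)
Your proof is correct and matches the paper's approach exactly: the paper simply states the corollary as ``a direct consequence of Theorem \ref{bmo20.1}'' without further argument, and your write-up is precisely the one-line unpacking of that consequence via the scale-invariant formulation \eqref{bmo01.1}.
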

Although the conjecture is not proved in general in any dimension of the space, a natural question is whether the validity of the conjecture could cover the density eigenvalues. This is the case for $k=1,2$. To bring   support in this direction, our purpose is to prove that estimates obtained by Kr\"oger  for smooth sets (see  \cite{Kr92} ) are still valid in our context of  (degenerate) densities.  As we shall see below, the estimates hold true and naturally involve the $L^\infty$ and $L^1$ norms of the densities. The proof is, in its main lines, the same, except the need of approximation of degenerate densities by regular problems.

\begin{proof}[Proof of Theorem \ref{bmo33.1}]
Let $\rho \in \L^1(\R^N) \cap \L^\infty(\R^N, \R^+)$, $\rho \not \equiv 0$. We want to prove that
\[\forall k \in \N, \;\;\; \mu_k(\rho) \leq 4\pi^2\Bigg( \frac{(N+2)k}{2\omega_{N}}\frac{||\rho||_\infty}{||\rho||_1} \Bigg)^{2/N}.\]

From Lemma \ref{bmo28} we know that
$$\mu_k(\rho + \vps e^{-|\cdot|^2}, \rho \1_{\B_{1/\vps}} + \vps^2 e^{-|\cdot|^2}) \ra \mu_k(\rho) \mbox{ for } \vps \ra 0.$$
Consequently, it is enough to prove  inequality \eqref{bmo377} below, for the couple of densities $(\rho + \vps e^{-|\cdot|^2},  \rho \1_{\B_{1/\vps}} + \vps^2 e^{-|\cdot|^2})$ and then to pass to the limit
\begin{equation}\label{bmo377}
\mu_k (\rho_1, \rho_2) \leq 4 \pi^2 \frac{||\rho_1||_1}{||\rho_2||_1}\Bigg(\frac{ (N+2)k}{2\omega_{N}} \frac{||\rho_2||_\infty}{||\rho_2||_1}\Bigg)^{2/N}.
\end{equation}

So, without restricting the generality, we can assume that
$\rho_1,\rho_2\in \L^1(\R^N)\cap\L^\infty(\R^N)$ are such that there exists $\alpha, \beta > 0$ verifying
\[ \alpha e^{-\frac{|x|^2}{2}} \leq \rho_1, \rho_2  \mbox{ and } \rho_2 \leq \beta e^{-\frac{|x|^2}{2}} .\]
In our case, $\alpha= \vps^2$ and $\beta= (1+\vps^2)e^{\frac{1}{2\vps^2}}$.
Then the eigenvalue problem

\begin{equation*}
    \left\{
      \begin{aligned}
         -\div[\rho_1\nabla u] = & \mu_k (\rho_1, \rho_2) \rho_2 u  \text{ on } \R^N
        \\ u \in H^1(\R^N, \gamma_N)
      \end{aligned}
    \right.
\end{equation*}

is well posed.

 We   detail the proof  of inequality \eqref{bmo377} in order to show how to handle the presence of densities, but refer to the paper of Kr\"oger  \cite{Kr92}  for the original proof (see also \cite{CES15,CoEl19}).

 Let $u_0,...,u_{k-1}$ be eigenfunctions associated to $\mu_0,...,\mu_{k-1}$ which satisfy  $\int_{\R^n} \rho_2 u_i u_j = \delta_{ij}$.
Let
\[\Phi(x,y) = \sum\limits_{j=0}^{k-1} \rho_2(x)u_j(x)u_j(y)\;
\mbox{ and } \; h_z(y)  =e^{i z\cdot y}.\]
If
\[\hat\Phi(z,y) = \frac{1}{(2\pi)^\frac2N}\int_{\R^N} e^{iz\cdot x}\Phi(x,y) \d x\]
denotes the Fourier transform of $\Phi(.,y)$ then for every $z \in \R^N$ the function $(2\pi)^\frac2N\hat\Phi(z,.)$ is the orthogonal projection of $h_z$ on $Span (u_0,...,u_{k-1})$ for the scalar product $(u,v) \to \int_{\R^n} \rho_2 uv$.
Hence
\[\mu_k (\rho_1, \rho_2) \leq  \frac{\int_{\R^N} \rho_1|\nabla(h_z(y) - (2\pi)^\frac2N\hat\Phi(z,y))|^2 \d y}
{\int_{\R^N}\rho_2|(h_z(y) - (2\pi)^\frac2N\hat\Phi(z,y)|^2\d y},\]
then by summing with respect to $z$ over $\B_r$ for some $r>0$, we get
\[\mu_k(\rho_1, \rho_2) \leq  \frac{\int_{\B_r}\int_{\R^N} \rho_1|\nabla(h_z(y) - (2\pi)^\frac2N\hat\Phi(z,y))|^2 \d y \d z}
{\int_{\B_r}\int_{\R^N}\rho_2|(h_z(y) - (2\pi)^\frac2N\hat\Phi(z,y)|^2 \d y \d z}.\]

Let's compute first the numerator:
$$
\int_{\B_r}\int_{\R^N} \rho_1|\nabla(h_z(y) - (2\pi)^\frac2N\hat\Phi(z,y))|^2 \d y \d z = \int_{\B_r}\int_{\R^N} \rho_1 |\nabla h_z(y)|^2 \ y \d z$$
    $$- 2 Re \int_{\B_r}\int_{\R^N} \rho_1 \nabla(h_z(y) - (2\pi)^\frac2N\hat\Phi(z,y))\cdot \nabla((2\pi)^\frac2N\hat\Phi(z,y)) \d y \d z  $$
$$    - (2\pi)^n \int_{\B_r}\int_{\R^N} \rho_1 |\nabla \hat\Phi(z,y)|^2 \d y \d z.
$$
Since $|\nabla h_z(y)|=|z|$, we get
\[ \int_{\B_r}\int_{\R^N} \rho_1 |z|^2 \d y \d z = ||\rho_1||_1 \frac{r^{N+2}}{N+2}N \omega_{N} \]
We have
$$
Re \sum\limits_{j=0}^k \int_{\B_r}\int_{\R^N} \rho_1 \nabla( h_z(y) - (2\pi)^\frac2N\hat\Phi(z,y))\cdot \nabla(\widehat{\rho_2 u_j}(z)u_j(y) \d y \d z$$
$$=Re \sum\limits_{j=0}^k \int_{\B_r} \widehat{\rho_2 u_j}(z) \int_{\R^N} \rho_1 \nabla( h_z(y) - (2\pi)^\frac2N\hat\Phi(z,y))\cdot \nabla u_j(y) \d y \d z=0.
$$
This is a consequence of the orthogonality hypothesis together with the fact that  $u_j$ is an eigenfunction. Indeed,
\[ \int_{\R^N} \rho_1 \nabla( h_z(y) - (2\pi)^\frac2N\hat\Phi(z,y))\cdot \nabla u_j(y) \d y = \mu_j \int_{\R^N} \rho_2 ( h_z(y) - (2\pi)^\frac2N\hat\Phi(z,y)) u_j(y) \d y = 0.\]
We conclude that an upper bound of the numerator is $||\rho_1||_1 \frac{r^{N+2}}{N+2}N\omega_{N}$.

Concerning the denominator, we similarly have

$$ \int_{\B_r}\int_{\R^N}\rho_2|(h_z(y) - (2\pi)^\frac2N\hat\Phi(z,y)|^2 \d y \d z
    = \int_{\B_r}\int_{\R^N} \rho_2 |h_z(y)|^2 \d y \d z$$
    $$ - 2 Re \int_{\B_r}\int_{\R^N} \rho_2 (h_z(y) - (2\pi)^\frac2N\hat\Phi(z,y))((2\pi)^\frac2N\hat\Phi(z,y)) \d y \d z  $$
   $$ - (2\pi)^N \int_{\B_r}\int_{\R^N} \rho_2 |\hat \Phi(z,y)|^2 \d y \d z$$

We get
\[\int_{\B_r}\int_{\R^N} \rho_2 |h_z(y)|^2 \d y \d z = ||\rho_2||_1 r^N \omega_{N},\]
$$  Re \int_{\B_r}\int_{\R^N} \rho_2 (h_z(y) - (2\pi)^\frac2N\hat\Phi(z,y))((2\pi)^\frac2N\hat\Phi(z,y)) \d y \d z =0,$$
\[(2\pi)^N \int_{\B_r}\int_{\R^N} \rho_2 |\hat \Phi(z,y)|^2 \d y \d z= (2\pi)^N \sum\limits_{j=0}^k \int_{\B_r} |\widehat{\rho_2 u_j}|^2 \d z \]
Now $\int_{\B_r} |\widehat{\rho_2 u_j}|^2 \d z \leq \int_{\R^n} |\widehat{\rho_2 u_j}|^2 \d z$. This last term is finite since $\rho_2 u_j^2 \in \L^1(\R^N)$  and $\rho_2$ is bounded, so that $\rho_2 u_j \in L^2(\R^N)$. Using Plancherel's theorem

\begin{align*}
    \int_{\R^N} |\widehat{\rho_2 u_j}|^2 \d z & = \int_{\R^N} |\rho_2 u_j|^2 \d z\\
    & \leq ||\rho_2||_\infty \int_{\R^N} \rho_2 |u_j|^2 \d z\\
    & = ||\rho_2||_\infty
\end{align*}

Now if $\frac{r^N}{N} ||\rho_2||_1 N\omega_{N} - (2\pi)^N k ||\rho_2||_\infty > 0$, we have :

\[\mu_k \leq \frac{\frac{r^{N+2}}{N+2} ||\rho_1||_1 N\omega_{N}}{\frac{r^N}{N} ||\rho_2||_1 N\omega_{N} - (2\pi)^N k ||\rho_2||_\infty} \]
which leads to the desired result when
\[r = 2\pi\Bigg(\frac{(N+2)k ||\rho_2||_\infty}{2 \omega_{N} ||\rho_2||_1} \Bigg)^{1/N}.\]

\end{proof}

\section{Numerical approximation of optimal densities}\label{bmo:s5}
In this section we describe a numerical approximation process of optimal densities  in $\R^2$ for $k=1, \dots, 8$. For $k=1,2$ we recover the theoretical results proved in \cite{BuHe19}, namely the ball and the union of two equal balls respectively. It is a challenge to prove or disprove, for $k \ge 3$, that the optimal densities correspond to a characteristic function. The numerical simulations suggest, to some extent, that the optimal densities are not  characteristic functions. We compare the optimal densities with previous computations of optimal shapes and we obtain reliable better values. However, the excess of the optimal eigenvalues in the class of densities over the optimal eigenvalue in the class of domains is relatively low.

\bigskip
\noindent{\bf Spurious modes.} Let $D=(-1,1)^N\sq \R^N$.   For $\vps>0$ we denote by $\mu_k^\vps(\rho)$ the $k$-th eigenvalue of the well posed elliptic problem
\begin{equation}
    \left\{
      \begin{aligned}
         -\div[(\rho+\vps)\nabla u] = & \mu_k^\vps(\rho) (\rho+\vps^2) u  \text{ on } D,
        \\ \partial_n u & = 0 \text{ on } \partial D.
      \end{aligned}
    \right.
    \label{eq:approx_pb}
\end{equation}
The following occurs.
\begin{lemma}\label{bmo28.7}
  Let $D$ be a bounded, open Lipschitz set and $\rho\in  L^\infty (D, \R^+)$, $\rho\not\equiv 0$. With the notations above, for $\vps \ra 0$ we have
 $$\forall k \in \N, \; \mu_k^\vps (\rho) \ra  \mu_k(\rho).$$
 \end{lemma}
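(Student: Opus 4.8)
The plan is to prove the two one‑sided bounds
$\limsup_{\vps\to 0}\mu_k^\vps(\rho)\le\mu_k(\rho)$ and $\liminf_{\vps\to 0}\mu_k^\vps(\rho)\ge\mu_k(\rho)$. Throughout I assume $D$ is connected (this covers the application $D=(-1,1)^N$; in fact connectedness is needed, since a component of $D$ carrying no mass of $\rho$ contributes a spurious zero eigenvalue of \eqref{eq:approx_pb} that does not persist in the limit). For the upper bound I would fix $\eta>0$ and an almost‑optimal admissible subspace for $\mu_k(\rho)$, spanned by $u_0,\dots,u_k\in C_c^\infty(\R^N)$ with $\{u_j\1_{\{\rho>0\}}\}$ independent and maximal Rayleigh quotient $\le\mu_k(\rho)+\eta$, and then use $\operatorname{span}\{u_j|_D\}\sq H^1(D)$ as a test space for \eqref{eq:approx_pb}: since the weights $\rho+\vps$ and $\rho+\vps^2$ differ from $\rho$ only by constants, the Rayleigh quotients of \eqref{eq:approx_pb} converge, uniformly over the (finite‑dimensional) unit sphere, to those of $\rho$ as $\vps\to 0$ — the denominators staying bounded below by a positive constant because this test space sees $\{\rho>0\}$ in dimension $k+1$ — whence $\limsup_{\vps\to0}\mu_k^\vps(\rho)\le\mu_k(\rho)+\eta$; letting $\eta\to0$ gives the claim. (Equivalently, one may quote Lemma~\ref{bmo15} applied to the densities $(\rho+\vps)\1_D$, $(\rho+\vps^2)\1_D$.)

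The substance is the lower bound, and the argument parallels the lower‑semicontinuity half of Lemma~\ref{bmo28} but is lighter because $D$ is bounded, so $H^1(D)$ embeds compactly in $L^2(D)$ and no Gaussian weights or truncations are needed. Assuming the $\liminf$ finite, I would take a subsequence $\vps_n\to0$ with $\mu_k^{\vps_n}(\rho)\to L:=\liminf_{\vps\to0}\mu_k^\vps(\rho)$, put $M:=\sup_n\mu_k^{\vps_n}(\rho)$, and pick eigenfunctions $u_0^n,\dots,u_k^n\in H^1(D)$ of \eqref{eq:approx_pb} that are orthonormal for the $(\rho+\vps_n^2)$‑inner product, so that $\int_D(\rho+\vps_n)|\nabla u_j^n|^2=\mu_j^{\vps_n}(\rho)\le M$ and $\int_D(\rho+\vps_n)\nabla u_i^n\cdot\nabla u_j^n=0$ for $i\neq j$. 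The crucial point is to control escape of the eigenfunctions onto $\{\rho=0\}$: from $\vps_n^2(u_j^n)^2\le(\rho+\vps_n^2)(u_j^n)^2$ and $\vps_n^2|\nabla u_j^n|^2\le\vps_n(\rho+\vps_n)|\nabla u_j^n|^2$, the rescaled functions $\vps_n u_j^n$ are bounded in $H^1(D)$ with gradients tending to $0$ in $L^2(D)$; by Rellich (along a further subsequence) $\vps_n u_j^n\to U_j$ in $L^2(D)$ with $U_j$ constant, and since $\int_D\rho(\vps_n u_j^n)^2\le\vps_n^2\to0$ while $\rho\not\equiv0$, necessarily $U_j=0$. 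Hence $\vps_n u_j^n\to 0$ in $L^2(D)$, which gives $\int_D\rho\,u_i^n u_j^n\to\delta_{ij}$; in particular the Gram matrices $B^n:=\bigl(\int_D\rho\,u_i^n u_j^n\bigr)_{ij}$ converge to the identity.

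To conclude I would use the (routine) observation that the value $\mu_k(\rho)$ in \eqref{bmo03.0} is unchanged if admissible test spaces are allowed to be arbitrary subspaces $W\sq H^1(D)$ with $\dim(W\1_{\{\rho>0\}})=k+1$ — this follows by approximating a basis of such a $W$ by functions of $C_c^\infty(\R^N)$ in $H^1(D)$, the condition ``independent modulo $\{\rho>0\}$'' being open. For $n$ large, $B^n$ is positive definite, so $V_n:=\operatorname{span}\{u_0^n,\dots,u_k^n\}$ is admissible in this enlarged sense and therefore $\mu_k(\rho)\le\max_{v\in V_n\setminus\{0\}}\frac{\int_D\rho|\nabla v|^2}{\int_D\rho v^2}$; writing $v=\sum_j c_j u_j^n$ and bounding the numerator by $\int_D(\rho+\vps_n)|\nabla v|^2=\sum_j c_j^2\,\mu_j^{\vps_n}(\rho)\le\mu_k^{\vps_n}(\rho)\,|c|^2$ and the denominator below by $\lambda_{\min}(B^n)\,|c|^2$ gives $\mu_k(\rho)\le\mu_k^{\vps_n}(\rho)/\lambda_{\min}(B^n)$; letting $n\to\infty$ and using $\lambda_{\min}(B^n)\to1$ yields $\mu_k(\rho)\le L$, which together with the upper bound proves $\mu_k^\vps(\rho)\to\mu_k(\rho)$.

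I expect the only genuinely delicate step to be ruling out escaping/spurious modes, i.e. proving $U_j=0$ above — this is exactly what the asymmetric regularization ($\rho+\vps$ in the numerator versus $\rho+\vps^2$ in the denominator) is designed to enable, and here it is disposed of by the $\vps_n u_j^n$‑rescaling together with the compactness of $H^1(D)$; the $H^1(D)$‑reformulation of $\mu_k(\rho)$ is the only other point requiring a short argument, and everything else is bookkeeping with finite‑dimensional Rayleigh quotients.
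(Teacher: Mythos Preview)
Your proposal is correct and follows essentially the same route as the paper: the paper simply says the proof is ``similar to Lemma~\ref{bmo28}'', and your argument is precisely that adaptation --- upper semicontinuity via fixed test spaces (Lemma~\ref{bmo15}), lower semicontinuity by rescaling the eigenfunctions as $\vps_n u_j^n$, using compactness to force the limit to be a constant, and killing that constant with $\rho\not\equiv0$, then testing $\mu_k(\rho)$ on the eigenfunction span. Your explicit remark on the need for connectedness of $D$, and your care in justifying that $H^1(D)$ subspaces are admissible for the min--max defining $\mu_k(\rho)$, are good additions that the paper leaves implicit.
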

 \begin{proof}
 The proof is similar to Lemma \ref{bmo28}, so we do not reproduce it here.
 \end{proof}
We rely on this approximation result in order to numerically estimate $\mu_k(\rho)$.
Notice the difference of homogeneity of the terms depending on $\vps$ and $\vps^2$.  This kind of formulation has been introduced by Allaire and Jouve in \cite{allaire_level-set_2005} in the context of the numerical optimization of mechanical structures and is a crucial point to avoid spurious modes.

Indeed,  assume $\Om \sq D$ is a smooth domain and consider the well posed elliptic problem
\begin{equation} \label{bmoo1}
\begin{cases}
  -\div[(\1_\Omega+\vps)\nabla u]  = \mu (\1_\Omega+\vps) u  \text{ on } D,
 \\ \partial_n u = 0 \text{ on } \partial D
\end{cases}
\end{equation}
expecting that the spectrum of this problem would converge to Neumann spectrum on $\Om$.
 In this natural, but naive, relaxation procedure, extra spurious eigenvalues are actually expected. These eigenmodes are solutions of the limit eigenvalue problem:
$$
\begin{cases}
-\Delta u = \mu u  \text{ on } D \setminus \Omega,
\\ u = 0 \text{ on } \Omega,
\\ \partial_n u = 0 \text{ on } \partial D.
\end{cases}
$$
We observe numerically that the associated extra eigenfunctions have a Dirichlet energy concentrated in the complement of the domain $\Om$! As an example, we plot the graph of the first spurious eigenfunction for $D = (-1,1)^2$ and $\Omega$ the centered ball of radius $r=0.4$ in Figure ~\ref{fig:spurious}. Values in Table ~\ref{fig:spurious_persistence} illustrate the persistence of spurious modes even for small values of $\vps$.

\begin{table}
  \begin{tabular}{|l||l|l|l|l|l|l|l||l|l|l|}
  \hline
   & \multicolumn{7}{|c||}{$\vps - \vps$} & \multicolumn{3}{|c|}{$\vps - \vps^2$}\\\hline
  $\vps$ & $\mu_1$ & $\mu_2$ & $\mu_3$ & $\mu_4$ & $\mu_5$ & $\mu_6$ & $\mu_7$ & $\mu_1$ & $\mu_2$ & $\mu_3$ \\ \hline
  0.1 & 18.50 & 18.50 & 37.36 & 46.16 & 49.84 & 49.84 & 54.46 & 23.24 & 23.24 & 64.29  \\ \hline
 0.05 & 19.64 & 19.64 & 40.91 & 47.10 & 49.28 & 49.28 & 56.08 & 22.34 & 22.34 & 61.80  \\ \hline
 0.01 & 20.81 & 20.81 & 46.07 & 48.06 & 48.63 & 48.63 & 57.64 & 21.41 & 21.41 & 58.92  \\ \hline
 0.005 & 20.98 & 20.98 & 47.14 & 48.20 & 48.53 & 48.53 & 57.86 & 21.28 & 21.28 & 58.51  \\ \hline
 0.001 & 21.11 & 21.11 & 48.20 & 48.32 & 48.45 & 48.45 & 58.04 & 21.18 & 21.18 & 58.17  \\ \hline
 0.0005 & 21.13 & 21.13 & 48.33 & 48.36 & 48.44 & 48.44 & 58.06 & 21.16 & 21.16 & 58.12  \\ \hline
 0.0001 & 21.15 & 21.15 & 48.35 & 48.43 & 48.43 & 48.48 & 58.08 & 21.15 & 21.15 & 58.09  \\ \hline
 5e-05 & 21.15 & 21.15 & 48.35 & 48.43 & 48.43 & 48.50 & 58.08 & 21.15 & 21.15 & 58.09  \\ \hline
 1e-05 & 21.15 & 21.15 & 48.35 & 48.43 & 48.43 & 48.51 & 58.08 & 21.15 & 21.15 & 58.08  \\ \hline

  \end{tabular}
  \smallskip
\caption{Spurious modes persistence. Eigenvalues computed with the approximation \eqref{bmoo1} containing spurious modes (left) and with the approximation  \eqref{eq:approx_pb} (right)}
 \label{fig:spurious_persistence}
\end{table}

\begin{figure}
  \centering
  \includegraphics[width=0.5\textwidth]{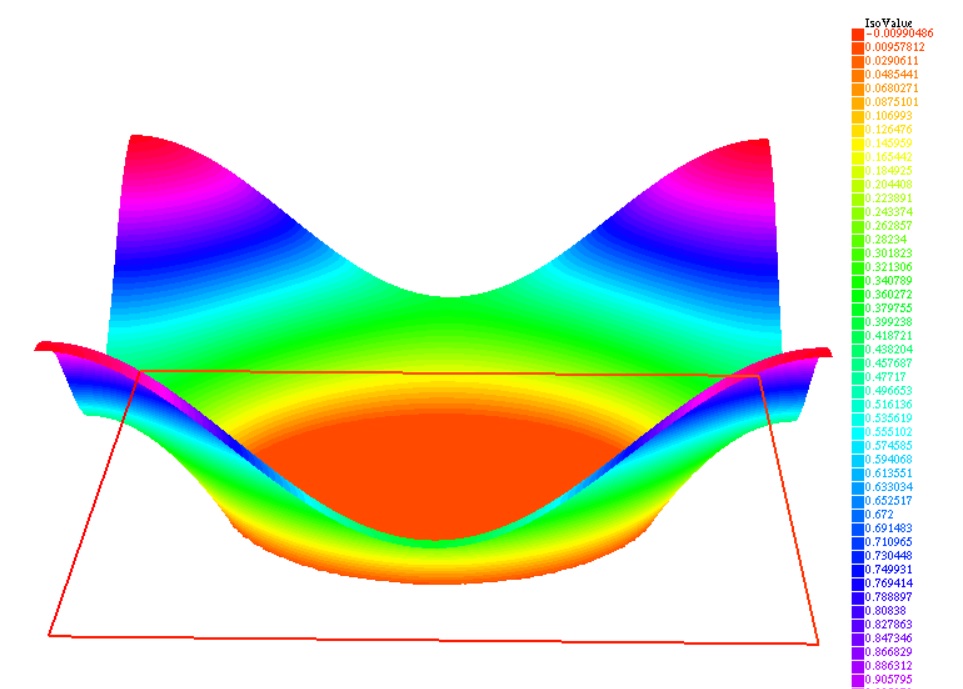}
  \caption{One spurious mode}
  \label{fig:spurious}
\end{figure}

Coming back to problem \eqref{bmo01}, our numerical approximation  is fully justified by the avoidance of spurious modes from  Lemma  \ref{bmo28.7} and by the following proposition.
\begin{proposition}\label{bmo31}
 Under the  notations of  \eqref{eq:approx_pb}, we have
\[ \max\limits_{\rho\in\L_m^1(D,[0,1])} \mu_k^\vps(\rho) \xrightarrow[\vps \to 0]{} \max\limits_{\rho\in\L_m^1(D,[0,1])} \mu_k(\rho).\]
\end{proposition}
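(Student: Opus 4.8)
The plan is to establish the two inequalities
$$\limsup_{\vps\to 0}\ \max_{\rho\in\L^1_m(D,[0,1])}\mu_k^\vps(\rho)\ \le\ \max_{\rho\in\L^1_m(D,[0,1])}\mu_k(\rho)\qquad\text{and}\qquad \liminf_{\vps\to 0}\ \max_{\rho\in\L^1_m(D,[0,1])}\mu_k^\vps(\rho)\ \ge\ \max_{\rho\in\L^1_m(D,[0,1])}\mu_k(\rho)$$
separately. The second one is the easy direction: let $\rho^*\in\L^1_m(D,[0,1])$ be a maximizer of the right-hand side (its existence is guaranteed by an argument entirely analogous to Theorem \ref{bmo06.1}, or more simply by the upper semicontinuity Lemma \ref{bmo15} together with weak-$*$ compactness of $\L^1_m(D,[0,1])$ in a bounded $D$). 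Then by Lemma \ref{bmo28.7}, $\mu_k^\vps(\rho^*)\to\mu_k(\rho^*)=\max_\rho\mu_k(\rho)$, and since $\max_\rho\mu_k^\vps(\rho)\ge\mu_k^\vps(\rho^*)$ we get the desired lower bound on the $\liminf$.

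For the first (harder) direction, for each $\vps>0$ pick a maximizer $\rho_\vps\in\L^1_m(D,[0,1])$ of $\rho\mapsto\mu_k^\vps(\rho)$; such a maximizer exists because the problem \eqref{eq:approx_pb} is elliptic and well posed on the fixed bounded Lipschitz $D$, so $\rho\mapsto\mu_k^\vps(\rho)$ is upper semicontinuous for weak-$*$ $\L^\infty$ convergence (same argument as Lemma \ref{bmo15.1}) on the weak-$*$ compact set $\L^1_m(D,[0,1])$. Up to a subsequence, $\rho_\vps\rau\rho_0$ weak-$*$ in $L^\infty(D)$, and the constraint passes to the limit so $\rho_0\in\L^1_m(D,[0,1])$. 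Now I claim an upper semicontinuity statement adapted to the perturbed quotients: along any sequence $\vps\to 0$ with $\rho_\vps\rau\rho_0$,
$$\limsup_{\vps\to 0}\ \mu_k^\vps(\rho_\vps)\ \le\ \mu_k(\rho_0).$$
This is proved exactly as Lemma \ref{bmo15.1} / Lemma \ref{bmo15}: fix $\delta>0$, choose an admissible $(k+1)$-dimensional test space $S=\operatorname{span}\{u_01_{\{\rho_0>0\}},\dots,u_k1_{\{\rho_0>0\}}\}$ with $u_i\in C^\infty_c(D)$ realizing $\mu_k(\rho_0)+\delta$; for $\vps$ small the analogous space $S^\vps=\operatorname{span}\{u_i1_{\{\rho_\vps>0\}}\}$ has dimension $k+1$ and bounds $\mu_k^\vps(\rho_\vps)$ from above; picking the maximizing combination $u_\vps=\sum\alpha_i^\vps u_i$ with $\sum(\alpha_i^\vps)^2=1$ and passing to a further subsequence $\alpha_i^\vps\to\alpha_i$, the numerators $\int_D(\rho_\vps+\vps)|\nabla u_\vps|^2$ converge to $\int_D\rho_0|\nabla\tilde u|^2$ (the $\vps$ term vanishes since $\int_D|\nabla u_\vps|^2$ is bounded and $|D|<\infty$) and the denominators $\int_D(\rho_\vps+\vps^2)u_\vps^2$ converge to $\int_D\rho_0\tilde u^2$, where $\tilde u=\sum\alpha_i u_i$. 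Hence $\limsup_\vps\mu_k^\vps(\rho_\vps)\le\mu_k(\rho_0)+\delta$, and letting $\delta\to 0$ gives the claim. Combining, $\limsup_\vps\max_\rho\mu_k^\vps(\rho)=\limsup_\vps\mu_k^\vps(\rho_\vps)\le\mu_k(\rho_0)\le\max_\rho\mu_k(\rho)$.

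Putting the two inequalities together yields the proposition. The main obstacle is the upper-semicontinuity claim along the sequence $\rho_\vps$: one must check that the two perturbation terms (the $\vps$ in the numerator and the $\vps^2$ in the denominator, with their deliberately different homogeneities) genuinely disappear in the limit without disturbing the limiting Rayleigh quotient, and that the limiting test functions still form a $(k+1)$-dimensional space adapted to $\rho_0$ — but since everything happens on the fixed bounded domain $D$ with smooth compactly supported test functions, these are exactly the same routine estimates already carried out in Lemmas \ref{bmo15.1}, \ref{bmo15} and \ref{bmo28}, with no new difficulty of the concentration-compactness type that plagued Theorem \ref{bmo06.1}.
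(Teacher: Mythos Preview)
Your proposal is correct and follows essentially the same approach as the paper: take maximizers $\rho^*$ of $\mu_k$ and $\rho_\vps$ of $\mu_k^\vps$, use Lemma \ref{bmo28.7} on $\rho^*$ for the $\liminf$ direction, and use the upper semicontinuity of Lemma \ref{bmo15} (applied to the pair $(\rho_\vps+\vps,\rho_\vps+\vps^2)\rau(\rho_0,\rho_0)$) along a weak-$*$ convergent subsequence of $\rho_\vps$ for the $\limsup$ direction. Your version is simply more explicit about why the $\vps$ and $\vps^2$ perturbation terms vanish in the limit, which the paper leaves to the reader; one cosmetic remark is that for the $\vps$-problem the denominator density $\rho_\vps+\vps^2$ is strictly positive on all of $D$, so the indicator $1_{\{\rho_\vps>0\}}$ in your definition of $S^\vps$ is unnecessary and the test space is automatically $(k+1)$-dimensional.
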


\proof  Let $\rho^* \in\L_m^1(D,[0,1])$ be a maximizer of $\mu_k$ and $\rho_\vps\in\L_m^\infty(D,[0,1])$ be a maximizer of $\mu_k^\vps$ (their existence are immediate in view of Lemma \ref{bmo15}).
Without restricting generality, we can assume that
$$\rho_\vps \xrightharpoonup{\L^\infty-*} \Tilde\rho$$
and $\Tilde\rho \in \L^1_m(D, [0,1])$.

Then,      on  the one hand
$$\limsup\limits_{\vps \ra 0} \mu_k^\vps(\rho_\vps) \leq \mu_k(\Tilde\rho)\le \mu_k(\rho^*). $$
On the other hand,
\[ \mu_k^\vps(\rho^*) \leq \mu_k^\vps(\rho_\vps) \]
so that passing to the limit we get
$$\mu_k(\rho^*) \le \liminf_{\vps \ra 0}  \mu_k^\vps(\rho_\vps),$$
which implies that $\tilde \rho$ is also maximizer for $\mu_k$ and concludes the proof.
\qed

\bigskip
\noindent{\bf Numerical strategy.}
  We fix $\vps, m > 0$  and  look for  a numerical solution of
$$\max\{\mu_k^\vps(\rho) : \rho \in \L^1_m(D, [0,1])\}.$$
We implemented a finite element method for the computation of eigenvalues.
Assume $D $ is meshed by a union of triangles $(T_p)_p$. Densities from the  space $\L^1_m(D, [0,1])$ are approximated by standard P1 elements consisting of continuous piecewise linear functions on the given mesh. We denote this space  by $V_h$. Functions of $H^1(D)$ are approximated by the space $U_h$ of standard P2 elements consisting of continuous piecewise quadratic polynomials. The canonical basis of $V_h$ (resp. $U_h$)  is denoted by $(\phi_i)_i$ (resp. $(\psi_i)_i$).
For a given $\rho \in V_h$ we denote by $\rho_i$ it's i-th coordinate in the canonical basis of $V_h$.
The discretized version of our eigenvalue problem is then:
\[
 \vec{M}^\vps(\rho)\vec{u}_k^\vps(\rho) = \mu_k^\vps(\rho)  \vec{K}^\vps(\rho)\vec{u}_k^\vps(\rho)
\]
where
\[\vec{M}^\vps(\rho)_{i,j} = \int_D (\rho+\vps) \nabla \psi_i \nabla \psi_j, \]
\[\vec{K}^\vps(\rho)_{i,j} = \int_D (\rho+\vps^2) \psi_i \psi_j\]
and $\vec{u}_k^\vps(\rho)$ are the coordinates of $u_k^\vps(\rho)$ in $(\psi_i)_i$.

In order to implement a gradient descent algorithm, we compute the discrete derivative of our functional
\begin{align*}
  \mu_k^\vps \colon V_h &\to \R\\
  \rho &\mapsto \mu_k^\vps(\rho).
\end{align*}
For a fixed $\vps>0$ we are interested in solving the following maximization  problem:
\begin{maxi*}|s|
  {\rho \in V_h}{\mu_k^\vps(\rho).}
  {}{}
  \addConstraint{0 \leq \rho \leq 1}
  \addConstraint{\int_D\rho=m.}{}
\end{maxi*}
Using homogeneity arguments, we can get rid of the equality constraint by considering the unconstrained problem
\begin{maxi}|s|
  {\rho \in V_h}{ \| \rho \|_1 \mu_k^\vps(\rho) - \alpha ( \| \rho \|_1- m)^2}
  {}{}
  \addConstraint{0 \leq \rho \leq 1}
\label{eqn:nonsmooth}
\end{maxi}
where $\alpha > 0$ is a parameter related to the mass constraint.
When the eigenvalue is simple, a straightforward direct computation of the $\rho_l$-th partial derivative of $\mu_k^\vps$ gives
\[
\partial_{\rho_l} \mu_k^\vps = \frac{(\vec{u}_k^\vps)^\top \partial_{\rho_l} \vec{M}^\vps \vec{u}_k^\vps
-\mu_k^\vps (\vec{u}_k^\vps)^\top \partial_{\rho_l} \vec{K}^\vps \vec{u}_k^\vps}
{(\vec{u}_k^\vps)^\top \vec{K}^\vps \vec{u}_k^\vps}
\]
where
\[\partial_{\rho_l} \vec{M}^\vps(\rho)_{i,j} = \int_{D} \phi_{l} \nabla \psi_i \nabla \psi_j \]
and
\[\partial_{\rho_l} \vec{K}^\vps(\rho)_{i,j} = \int_{D} \phi_{l} \psi_i \psi_j. \]

It is standard in eigenvalue optimization to expect a multiplicity higher than one for optimal parameters. This phenomenon is known to produce numerical difficulties in the approximation procedure related to the non differentiability of the functional \cite{MR1344670} \cite{MR1624599}. A  multiplicity higher than one causes the gradient direction to greatly change from one iteration to the next one close to the optimum causing instabilities preventing the algorithm to converge.
On the other hand, symmetric combination of multiple eigenvalues are known to preserve smoothness \cite{kato2013perturbation}.

Having this observation in mind, we introduce a straightforward (but efficient enough in our context) two phases process: When the optimization of problem (\ref{eqn:nonsmooth}) encounters a multiplicity higher than one preventing the gradient based approach to converge we switch to the equivalent smoother problem defined by
\begin{maxi*}|s|
  {\rho \in V_h}{ \| \rho \|_1 \sum_{i=0}^{l-1} \mu_{k+i}^\vps(\rho) - \alpha ( \| \rho \|_1- m)^2}{}{}
  \addConstraint{0 \leq \rho \leq 1}
  \addConstraint{\sum_{0 \leq i<j \leq l-1} (\mu_{k+i}(\rho) - \mu_{k+j}(\rho))^2=0}
\end{maxi*}
where $l = \max\{ l\geq 0 : \mu_{k+l}^\vps(\rho)- \mu_{k}^\vps(\rho) < \sigma\}$ for some small parameter $\sigma>0$. Notice that in this new, trivially equivalent, formulation, cost and constraint functions are both smooth in the neighborhood of the optimal density.

\bigskip
\noindent{\bf Technical details.}
The optimization process has been carried out by IPOPT, an interior point optimization method \cite{wachter2006implementation}. Each side of the domain $D=[0,1]^2$ is discretized into $100$ segments then triangulated leading to a total of $11658$ degrees of freedom for the function space $V_h$. The finite elements computations have been carried out through GetFEM \cite{MR4199501}.
We used the values $\vps = 0.001$, $m=0.4$ and $\sigma=0.1$. 

After the optimization procedure, we perform a final estimation of the eigenvalue in a post-processing phase. This post-processing consists in getting rid of the areas where the density is almost zero and recomputing the eigenvalue on the remaining density on the smaller domain without using the relaxation. To this purpose, we used MMG \cite{MR3163123} to mesh the domain $\{\rho > 0.01 \}$ where the value $0.01$ is arbitrarily chosen. Then, we interpolate $\rho$ on a $P3$ finite element space on this new mesh and compute the now well-defined eigenvalue problem
\begin{equation}
  \begin{cases}
    -\div (\rho \nabla u) = \mu_k(\rho )\rho u \mbox { in } \{\rho >0.01\},\\
     \frac{\partial u}{\partial n} =0 \mbox { on } \partial  \{\rho >0.01\}
  \end{cases}
\end{equation}
to obtain precise reliable values of the eigenmodes.

\bigskip
\noindent{\bf Numerical Results.}
\begin{figure}
    \centering
    \includegraphics[width=0.24\textwidth]{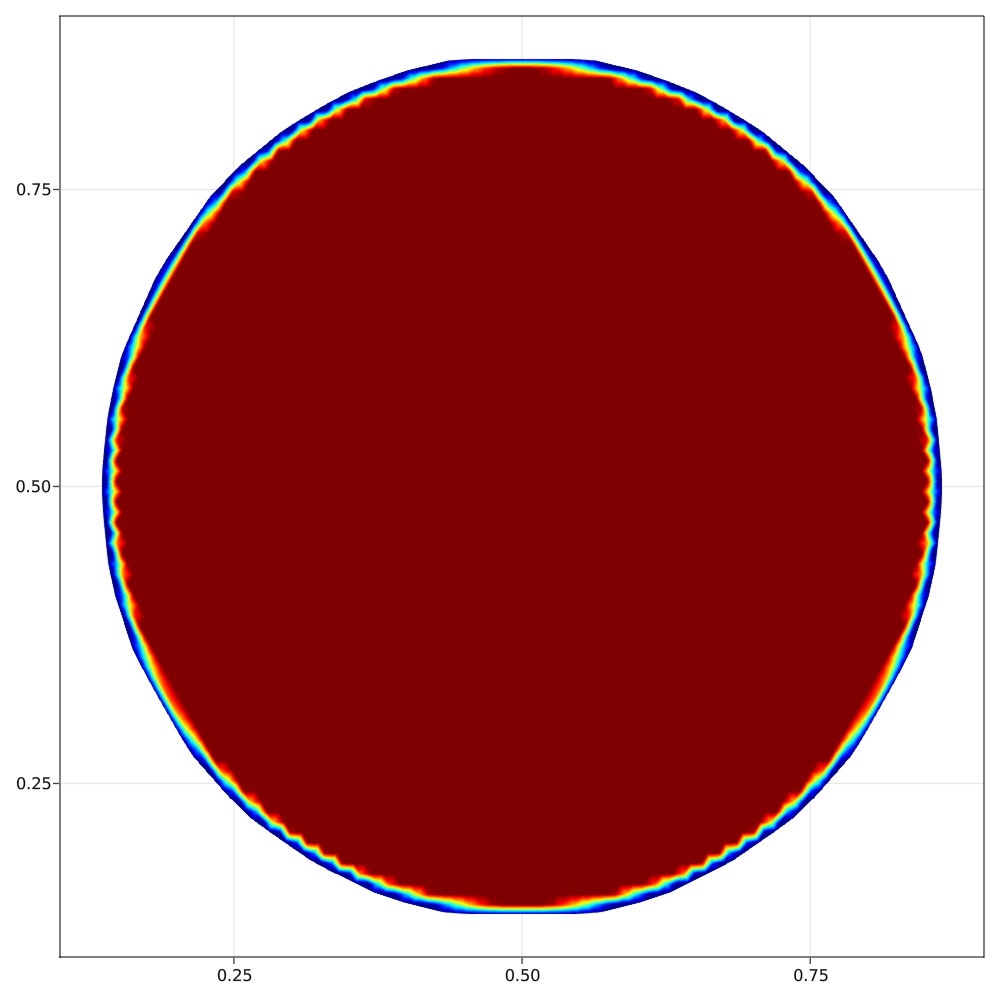}
    \includegraphics[width=0.24\textwidth]{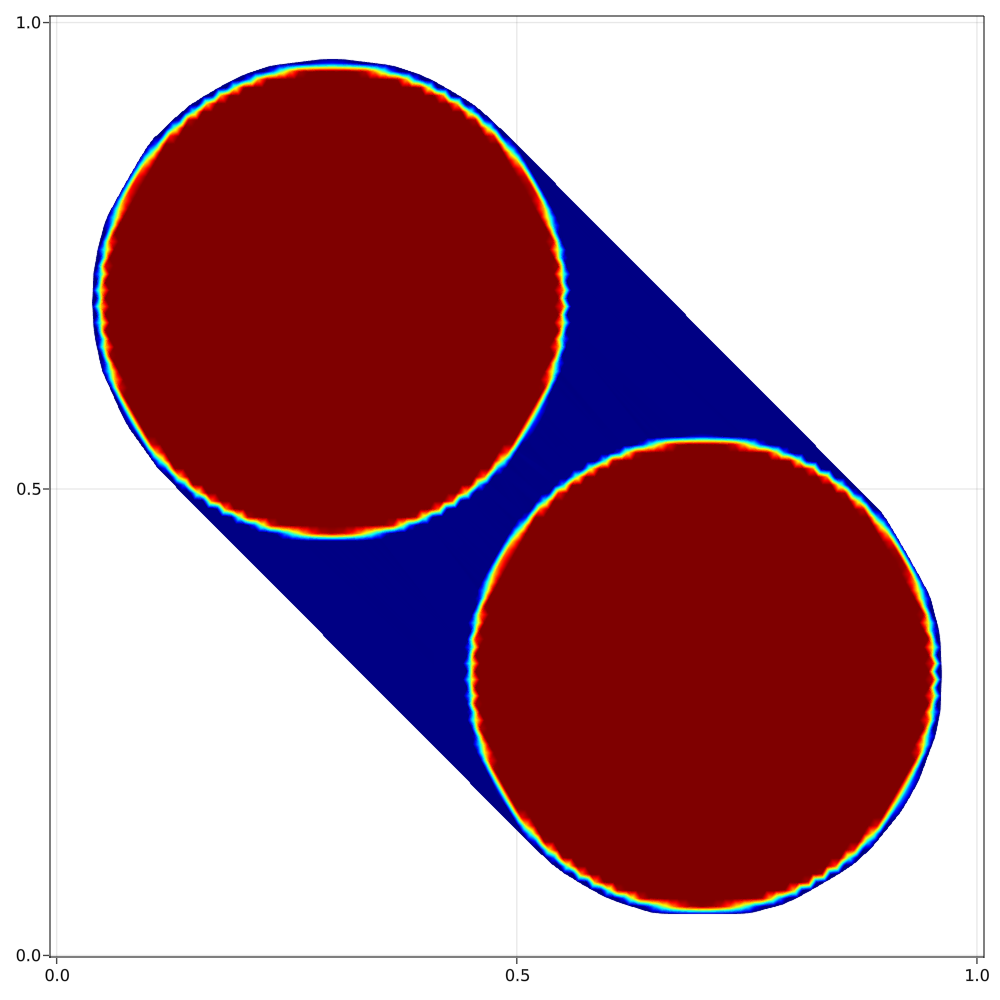}
    \includegraphics[width=0.24\textwidth]{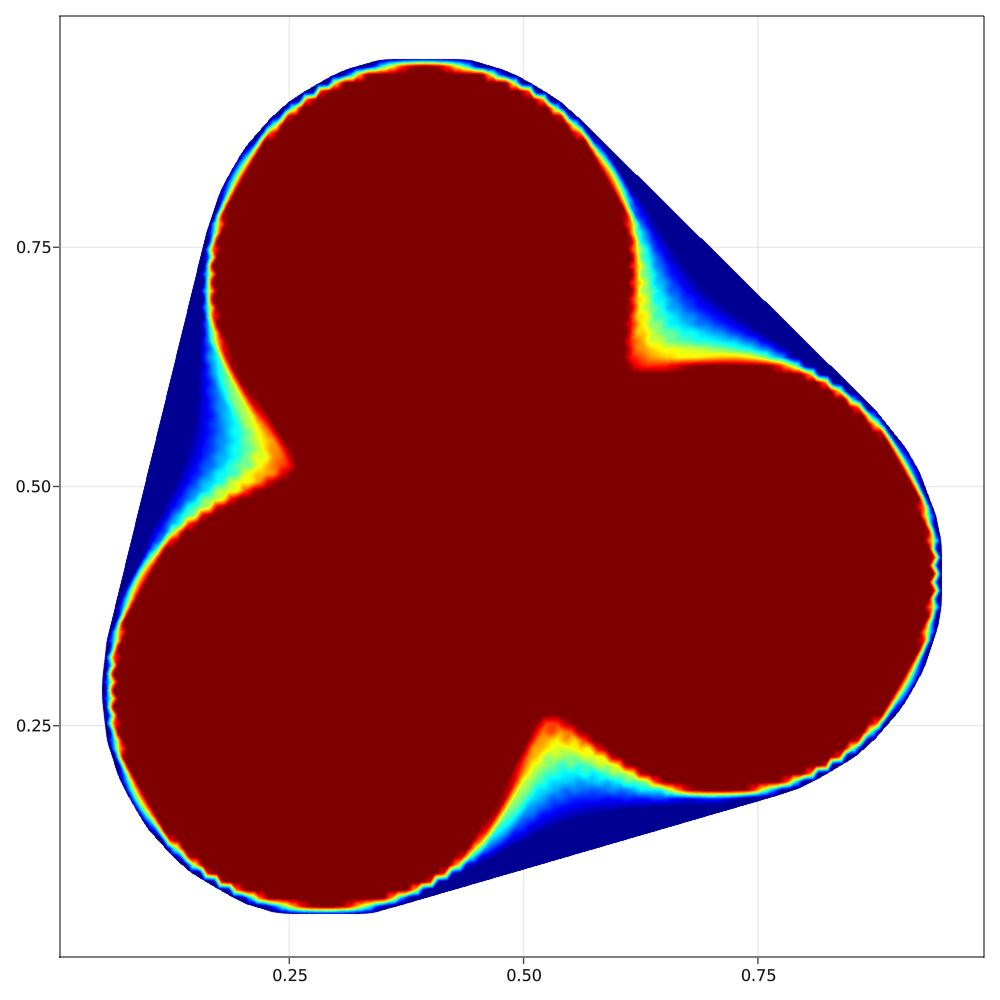}
    \includegraphics[width=0.24\textwidth]{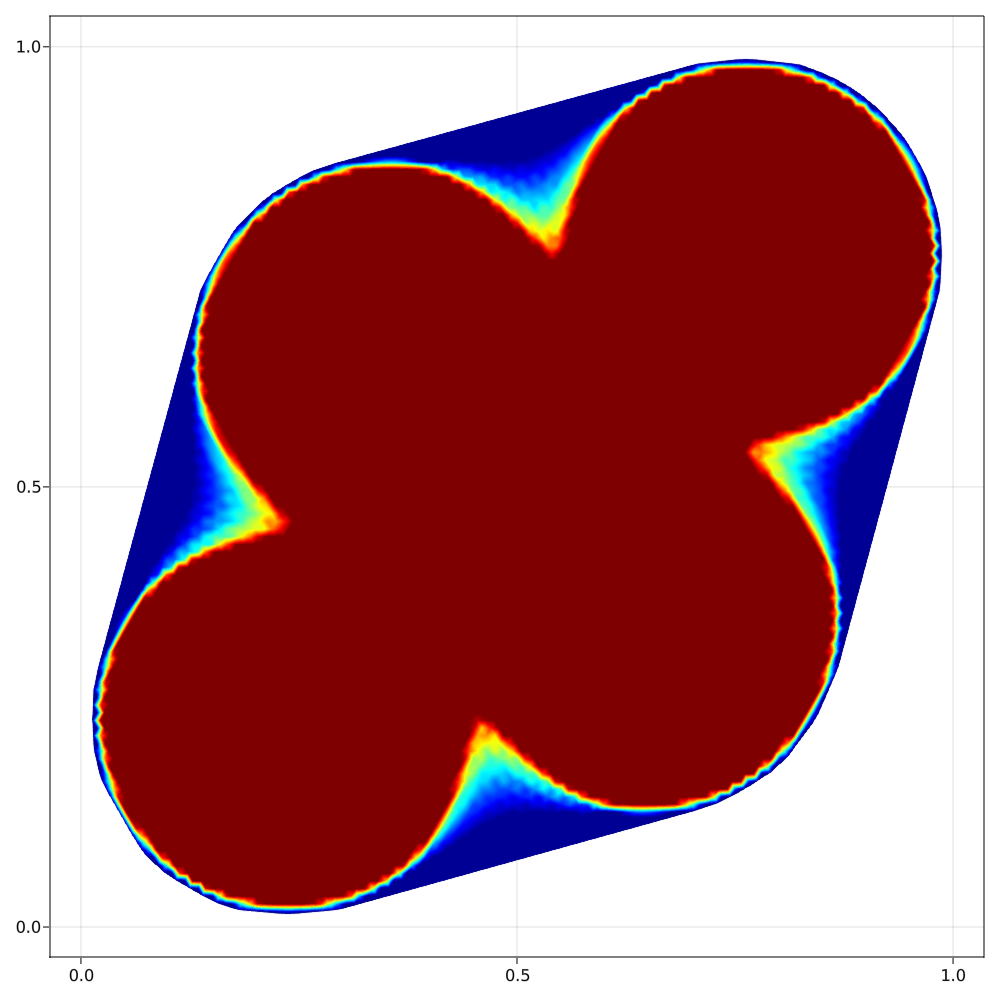}
    \includegraphics[width=0.24\textwidth]{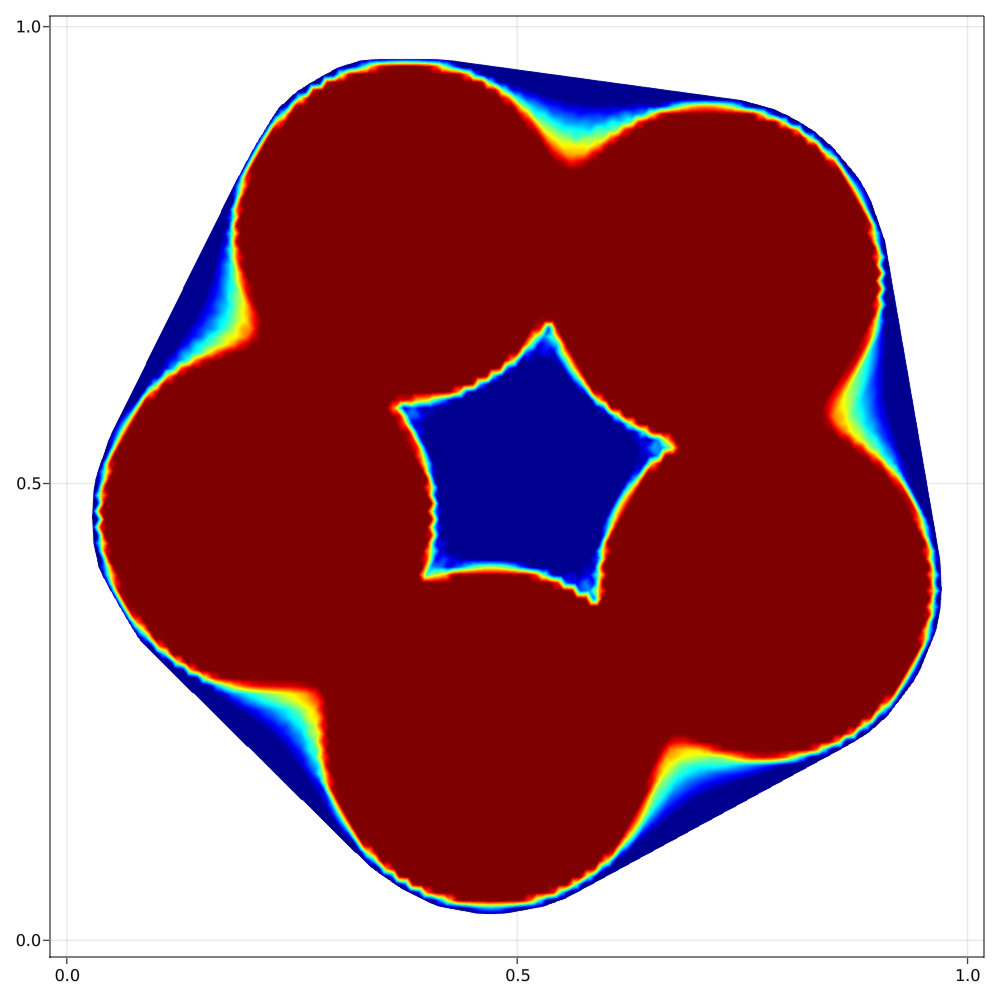}
    \includegraphics[width=0.24\textwidth]{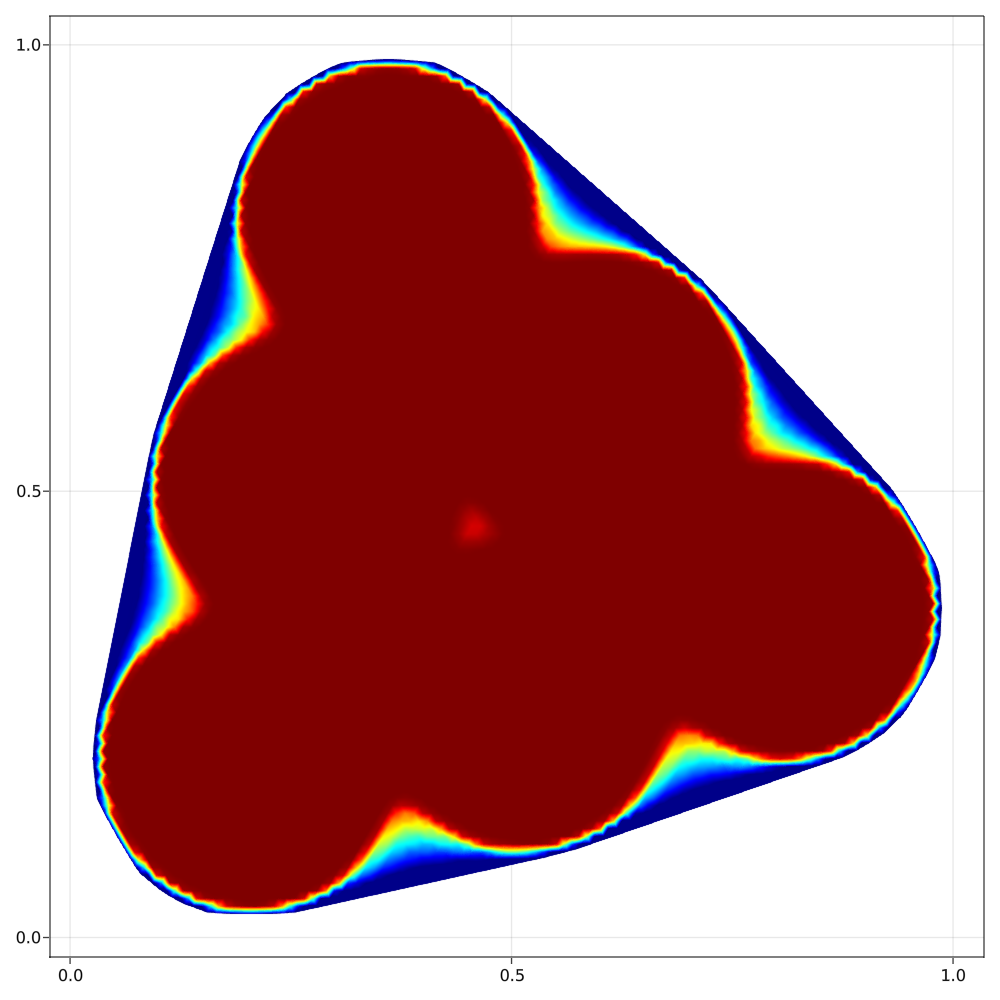}
    \includegraphics[width=0.24\textwidth]{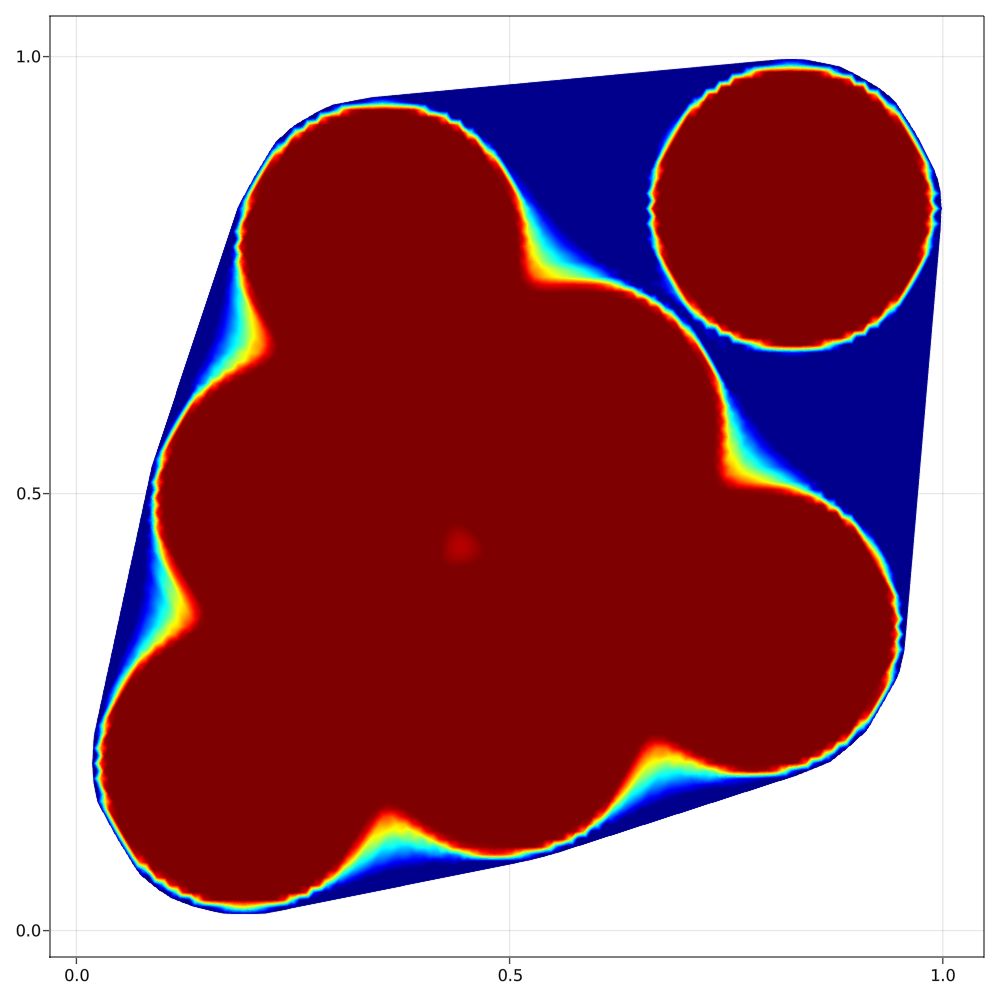}
    \includegraphics[width=0.24\textwidth]{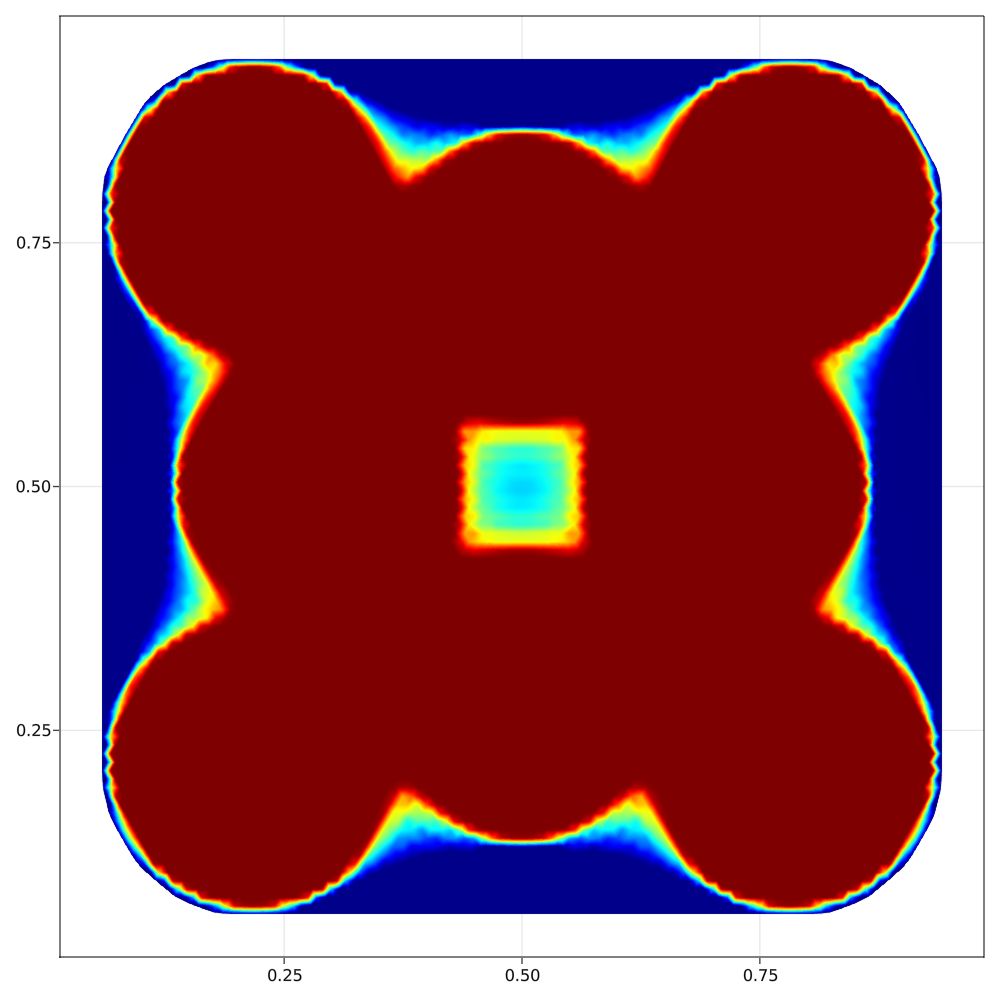}
    \caption{Approximation of the first eight optimal densities}
    \label{fig:numerical_results}
\end{figure}
Figure ~\ref{fig:numerical_results} displays the numerical results for $k=1,...,8$ plotted on the convex hull of $\{\rho > 0.01\}$. For $k=1$ and $k=2$, we obtained  respectively one and two disks, which meets the theory. For $k\geq3$,  optimal densities looks like homogenized union of disks.

In Table ~\ref{fig:comparison} we display the numerical values obtained by our  approach (for densities) compared both to the optimal values of \cite{AF12} and to the ones of  disjoint union of discs. We can observe that $k=4,k=5$ and $k=8$ are fairly different from the result in \cite{AF12}. Moreover, for $k=5$ and $k=8$, the "optimal" shape that one may somehow extract from an optimal density seems not to be a union of {\it simply connected} domains going beyond the framework in which the computation of \cite{AF12} have been carried out.

\begin{table}
  \begin{tabular}{|l||l|l|l|l|l|l|l|l|l|}
   \hline
   & $\mu_1$ & $\mu_2$ & $\mu_3$ & $\mu_4$ & $\mu_5$ & $\mu_6$ & $\mu_7$ & $\mu_8$ \\
  \hline
  Multiplicity & 2 & 2 & 3 & 3 & 3 & 4 & 4 & 4\\
  \hline
 Optimal densities & 10.65 & 21.28 & 32.92 & 43.90 & 54.47 & 67.25 & 77.96 & 89.47 \\
  \hline
  Optimal shapes, Antunes-Freitas \cite{AF12}&  &  & 32.79 & 43.43 & 54.08 & 67.04 & 77.68 & 89.22   \\
 \hline
 Union of discs & 10.65 & 21.30 & 31.95 & 42.60 & 53.25 & 63.90 & 74.55 & 88.85  \\
 \hline
  \end{tabular}
  \smallskip
\caption{Values comparison}
 \label{fig:comparison}
\end{table}

\bibliographystyle{mybst}

\end{document}